\providecommand{\keywords}[1]{\textbf{\textit{Keywords---}} #1}
\DeclarePairedDelimiter{\diagfences}{(}{)}
\newcommand{\diag}{\operatorname{diag}\diagfences}
\DeclareMathOperator*{\argmax}{arg\,max}
\DeclareMathOperator\supp{supp}
\title{Super-resolution of generalized spikes and spectra of confluent Vandermonde matrices\thanks{This research was supported by Israel Science Foundation grant 1792/20 and by Lower Saxony - Israel collaboration grant from the Volkswagen Foundation.}}
\author[1]{Dmitry Batenkov} \affil[1]{Department of Applied
	Mathematics, Tel-Aviv University, Tel-Aviv, Israel.}
\author[1]{Nuha Diab}
\affil[ ]{\textit{dbatenkov@tauex.tau.ac.il, nuhadiab@tauex.tau.ac.il}}
\date{\today}
\newtheorem{definition}{Definition}[section]
\newtheorem{theorem}{Theorem}[section]
\newtheorem{corollary}{Corollary}[section]
\newtheorem{lemma}{Lemma}[section]
\newtheorem{proposition}{Proposition}[section]
\begin{document}
	\maketitle
	
	\begin{abstract}
		We study the problem of super-resolution of a linear combination of Dirac distributions and their derivatives on a one-dimensional circle from noisy Fourier measurements. Following numerous recent works on the subject, we consider the geometric setting of ``partial clustering'', when some Diracs can be separated much below the Rayleigh limit. Under this assumption, we prove sharp asymptotic bounds for the smallest singular value of a corresponding rectangular confluent Vandermonde matrix with nodes on the unit circle. As a consequence, we derive matching lower and upper min-max error bounds for the above super-resolution problem, under the additional assumption of nodes belonging to a fixed grid.
	\end{abstract}
	
	\keywords{Super-resolution, Confluent Vandermonde matrix, Min-max error, Partial Fourier matrix, Sparse recovery, Smallest singular value, Dirac distributions, Decimation, ESPRIT}

	\section{Introduction}
	
	\subsection{Background}
	The problem of computional super-resolution (SR) is to recover
	the fine details of an unknown object from inaccurate
	measurments of inherently low resolution
	\cite{donoho1992a}. In recent years, there is much intrest in
	the problem of reconstructing a signal modelled by a linear
	combination of Dirac $\delta-$distributions
	(e.g. \cite{batenkov2021, batenkov2021a, Vandermonde,
		Li&Liao2020,candes2014,demanet2015,hockmann2021,liu2021,petz2021,cuyt2018}
	and references therein):
	\begin{equation}\label{simpleModel}
		\mu(x) = \sum_{j=1}^{s}a_j\delta_{\xi_j}  \quad a_j\in\mathbb{C},\; \delta_{\xi_j}=\delta(x-\xi_j), \ \xi_j \in (-\pi,\pi]
	\end{equation}
	from noisy and bandlimited Fourier measurements:
	\begin{equation}\label{eq:noisy-data}
		y_k := \hat{\mu}(k)+\eta_k, \quad  \hat{\mu}(k)=\langle \mu, e^{-ikx} \rangle , \quad k=0,1,...,M, \; |\eta_k|\leq\varepsilon.
	\end{equation}
	
	For the model \eqref{simpleModel} we have $\hat{\mu}(k)= \sum_{j=1}^{s}a_je^{ik\xi_j}$, and therefore the measurement vector $y=\{y_k\}_{k=0}^M $ can be expressed as
	\begin{equation}
		\label{eq:vandermonde-measurements}
		y = Va + \eta \in \mathbb{C}^{M+1}
	\end{equation}
	where $V$ is the $(M+1) \times s$ Vandermonde matrix with the
	nodes on the unit circle:
	\[ V := [e^{ik\xi_j}]_{k=0,...,M}^{j=1,...,s}.\] In order to
	describe the stability of this inverse problem, suppose that
	the nodes $\xi_j$ belong to a grid of step size $\Delta$\, and
	define the super-resolution factor (SRF) as
	$\frac{1}{(M\Delta)}$.  Suppose that at most
	$\ell \leqslant s$ nodes form a "cluster" of size $O(\Delta)$
	(to be rigorously defined below). In the ''super-resolution
	regime'' $SRF\gg 1$ \cite{Vandermonde,Li&Liao2020} showed that
	$\sigma_{\min}(V)$ scales like ${SRF}^{1-\ell}$ and
	consequently the worst-case reconstruction error rate of the
	coefficients of $\mu$ as in \eqref{simpleModel} from noisy
	measurements \eqref{eq:noisy-data} is of the order
	${SRF}^{2\ell -1}\varepsilon$. Despite the great amount of
	research devoted to the subject, there is currently no known
	tractable algorithm which provably achieves
	these min-max bounds for all signals of interest
	\cite{batenkov2021a}.
	
	\subsection{Our contributions}
	
	In this work we extend the methods and results of
	\cite{Vandermonde,Li&Liao2020} to the model
	\begin{equation}\label{generalizedModel}
		\mu = \sum_{j=1}^{s}a_j\delta_{\xi_j}+b_j\delta'_{\xi_j},
	\end{equation}
	
	where $\delta^{'}$ is the distributional derivative
	of the Dirac delta. The Vandermonde matrix $V$ in
	\eqref{eq:vandermonde-measurements} is replaced by the
	so-called \emph{confluent Vandermonde matrix} $U$, which is
	defined (up to normalization) as:
	\[ U := [e^{ik\xi_j} \
	ke^{i(k-1)\xi_j}]_{k=0,...,M}^{j=1,...,s}.\] Under the
	partial clustering assumptions, in Theorem \ref{mainTheorem} and \ref{UpperTheorem} we prove a sharp lower and upper bounds
	for the smallest singular value of $U$ in the super-resolution
	regime, and show that it scales like ${SRF}^{1-2\ell}$. These
	bounds are proved by extending the decimation approach from
	\cite{Vandermonde} for the lower bound on $\sigma_{\min}(U)$,
	and by extending the finite difference approximation approach
	from \cite{Li&Liao2020} for the upper bound, further generalizing it to any node vector $\boldsymbol{\xi}$ satisfying the clustering assumptions. In addition, our
	proof technique for bounding the remainder part in the upper
	bound of the smallest singular value can be applied to gain a
	slight improvement in Proposition 2.10 in \cite{Li&Liao2020}
	by relaxing the conditions on $M,\Delta$.
	
	As a consequence, in Theorem \ref{theorem:min-max-error} we also obtain sharp min-max bounds of order
	${SRF}^{4\ell -1}\varepsilon$ for the problem of sparse
	super-resolution of signals \eqref{generalizedModel} on a grid
	by extending the corresponding technique from
	\cite{Vandermonde}.
	
	Also, we show numerically that the well-known ESPRIT method for exponential fitting (appropriately extended to handle higher multiplicities) is optimal, meaning that it attains the min-max error bounds we established in Theorem \ref{theorem:min-max-error} for the recovered parameters of the signal \eqref{generalizedModel}.
	
	In relation to prior work on the subject, in
	\cite{batenkov2013c} the authors give a stability estimate for the more general model 
	\eqref{extendedmodel} with arbitrary fixed $n$, however
	assuming that the number of measurements $N+1$ equals the
	number of unknowns. Evaluating their estimate for our model
	and notation, their bound is of order
	${\Delta}^{4\ell -1}{\varepsilon}^{\frac{1}{2}}$, while ours in
	the same case is ${\Delta}^{4\ell -1}{\varepsilon}$.
	In contrast, \cite{batenkov2018} established the bound in the
	super-resolution setting of a single cluster (and off-grid
	nodes) to be of order ${SRF}^{4\ell}\varepsilon$, while we
	derive the min-max rate ${SRF}^{4\ell-1}\varepsilon$.
	
	\subsection{Discussion}
	
	Naturally, our results and techniques pave the way to analyzing the
	general model
	\begin{equation}\label{extendedmodel}
		\mu = \sum_{j=1}^{s}\sum_{l=0}^{n}a_{j,l}\delta_{\xi_j}^{(l)}
	\end{equation}
	in the clustered super-resolution regime. The applications of
	this model include modern sampling theory beyond the Nyquist
	rate, algebraic signal recovery, interpolation and multi-exponential
	analysis, to name a few (see \cite{Badeau2006,batenkov2012c,
		batenkov2013c,batenkov2015a, batenkov2018, Sidi1982, Badeau2008} and references
	therein). At the same time, we believe that several recent
	developments on the basic model \eqref{simpleModel} can be
	utilized to the more general setting, as follows.
	\begin{itemize}
		\item Recently, \cite{batenkov2021} succeeded to establish
		sharp bounds for the entire spectrum of $V$ without
		requiring the entire node set to be contained in a small
		interval of length $\frac{\pi}{s^2}$. We believe that similar
		techniques could be applied to \eqref{generalizedModel}
		in order to eliminate the above restriction.
		\item Optimal scaling of the constants in the above bounds for the spectrum of $V$  using harmonic analysis techniques has been investigated in \cite{batenkov_single-exponential_2021}, and it would be interesting to extend these methods to \eqref{extendedmodel}.
		\item While we obtain min-max rates for nodes on a grid, we
		expect to get similar rates for the ''off-grid'' model as in
		\cite{batenkov2021a}, where the node locations can be any real
		number. Furthermore, it should be possible to establish
		component-wise bounds for the coefficients of different
		orders and for the nodes themselves, as done in
		\cite{batenkov2015a, batenkov2013c, batenkov2018} for the
		more restrictive geometric settings of the
		problem. 
	\end{itemize}
	
	Going back to the model \eqref{simpleModel}, confluent
	Vandermonde matrices appear naturally in the perturbation
	analysis of the nonlinear least squares problems
	\cite{batenkov2021a,batenkov2018} for exponential fitting, and we expect our methods
	to be applicable in this context as well.
	
	The paper is organized as follows. In section
	\ref{sec:preliminaries} we establish some notation. In section
	\ref{sec:main-results} we formulate the main results, which
	are proved in section \ref{sec:proofs}. Finally, in section \ref{sec:numerics}
	we present numerical evidence confirming our bounds.
	
	\section{Preliminaries}\label{sec:preliminaries}
	\subsection{Notation}
	
	\begin{definition}
		\label{reconfluentVan}
		For $N \in 	\mathbb{N}$ and a vector 
		$\boldsymbol{\xi} = (\xi_1, . . . , \xi_s)$ of pairwise distinct real
		nodes $\xi_j \in ( - \pi , \pi ]$, we define the rectangular $(2N + 1) \times 2s$ {\bf confluent Vandermonde matrix}
		$U_N (\boldsymbol{\xi})$ as
		\[ U_N (\boldsymbol{\xi}) := \frac{1}{\sqrt{2N}}\big[z^k_j \quad k{z_j}^{k-1}\big]^{j=1,\dots,s} _{k=0,\dots, 2N} \]
		s.t. $z_j = \exp(i\xi_j)$.
	\end{definition}
	
	The main subject of the paper is the scaling of the smallest singular value of $U_N$ when some of the nodes of $\boldsymbol{\xi}$ nearly collide (become very close to each other).
	
	\begin{definition}[wraparound distance]
		For $t \in \mathbb{R}$, we denote
		\[\|t\|_{\tilde{\mathbb{T}}} := \big|\operatorname{Arg}\exp(it)\big| = \big|t \ \mod \ ( -\pi , \pi ]\big|,\]
		where $\operatorname{Arg}(z)$ is the principal value of the argument of $z \in \mathbb{C}\setminus\{0\}$, taking values in $( -\pi , \pi ]$.
	\end{definition}
	
	\begin{definition}[minimal separation]
		Given a vector of s distinct nodes $\boldsymbol{x} := (x_1,\dots,x_s)$ with $x_j \in ( -\pi , \pi ]$, we define the minimal separation (in wraparound sense) as
		\[\Delta := \Delta(\boldsymbol{x}) = \min_{i \neq j}\|x_i - x_j\|_{\tilde{\mathbb{T}}} \ . \]
	\end{definition}
	
	\begin{definition}\label{cluster_confg}
		The node vector $\boldsymbol{x} = (x_1,\dots,x_s) \subset \big( -\frac{\pi}{2} , \frac{\pi}{2} \big]$ is said to form a $(\Delta,\rho,s,\ell,\tau)$- clustered configuration for some $\Delta > 0, 2 \leq \ell \leq s, \ell-1 \leq \tau \leq \frac{\pi}{\Delta},$ and $\rho \geq 0$ if for each $x_j$ there exist at most $\ell$ distinct nodes
		\[ \boldsymbol{x}^{(j)} = \{x_{j,k}\}_{k=1,\dots, r_j} \subset \boldsymbol{x},\quad 1\leq r_j \leq \ell,\quad x_{j,1} \equiv x_j,\]
		such that the following conditions are satisfied:
		\begin{enumerate}
			\item For any $y \in \boldsymbol{x}^{(j)}\setminus \{x_j\}$, we have
			\[\Delta \leq \|y - x_j\|_{\tilde{\mathbb{T}}} \leq \tau\Delta. \]
			\item For any $y \in \boldsymbol{x} \setminus \boldsymbol{x}^{(j)}$, we have
			\[\|y - x_j\|_{\tilde{\mathbb{T}}} \geq \rho.\]
		\end{enumerate} 
	\end{definition}
	
	\begin{definition}\label{grid}
		For $\Delta > 0$ let $M=\big\lfloor\frac{\pi}{2\Delta}\big\rfloor$ and denote by $\mathcal{T}_{\Delta}$ the discrete grid 
		\[ \mathcal{T}_{\Delta} := \{k\Delta,\quad k=-M,\dots,M\} \subset \bigg[-\frac{\pi}{2},\frac{\pi}{2}\bigg] \ . \]
		Further define $G:= G(\Delta) =\big|\mathcal{T}_{\Delta}\big|=2M+1$.
	\end{definition}

	\begin{definition}
		For $\Delta,\rho,s,\ell,\tau$ as in Definition \ref{cluster_confg}, let $\mathcal{R}:=\mathcal{R}(\Delta,\rho,s,\ell,\tau)$ be the set of point distributions of the form $\mu=\sum_{j=1}^{s}a_j\delta_{t_j}+b_j\delta'_{t_j}$ where $t_j \in \mathcal{T}_{\Delta}$ and $a_j, b_j \in \mathbb{C}$ for all $j=1,\dots,s$, while $\boldsymbol{t}=(t_1,\dots,t_s)$ forms a $(\Delta,\rho,s,\ell,\tau)$-clustered configuration.
	\end{definition}
	
	\begin{definition}
		For fixed $N \in \mathbb{N}$, $\varepsilon > 0$, $N\Delta < 1$ and $\mu \in \mathcal{R}(\Delta,\rho,s,\ell,\tau)$, let  
		\[ B^N_{\varepsilon}(\mu):=\bigg\{y\in\mathbb{C}^{2N+1}: \ \Big(\frac{1}{2N}\sum_{k=0}^{2N}\big|y_k-\hat{\mu}(k)\big|^2\Big)^{\frac{1}{2}} < \varepsilon\bigg\} \ , \]
		where $\hat{\mu}(k)$ are the Fourier coefficients as defined in \eqref{eq:noisy-data}.
	\end{definition}
	
	\begin{definition}
		Let $\mathcal{A}:=\mathcal{A}(\mathcal{R},N,\varepsilon)$ be the set of functions $\varphi$ that maps each $y\in \cup_{\mu \in \mathcal{R}}B^N_{\varepsilon}(\mu)$ to a discrete distribution $\varphi_y \in \mathcal{R}(\Delta,\rho,s,\ell,\tau)$.
	\end{definition}
	
	\begin{definition}
		For $\mu=\sum_{j=1}^{s}a_j\delta_{t_j}+b_j\delta'_{t_j}$, the norm $\|\mu\|_2$ is the discrete $\ell_2$ norm of the coefficients vector:
		\[ \|\mu\|_2 := \Big(\sum_{j=1}^{s}|a_j|^2+|b_j|^2\Big)^{\frac{1}{2}} \ . \]
	\end{definition}
	
	\begin{definition}[min-max error]\label{def:minmax}
		The $\ell^2$ min-max error for the on-the-grid model is
		\[ \mathcal{E}(\mathcal{R},N,\varepsilon) = \inf_{\varphi \in \mathcal{A}}\sup_{\mu \in \mathcal{R}}\sup_{y\in B^N_{\varepsilon}(\mu)}\|\varphi_y - \mu\|_2 \ ,\]
		where $\varphi_y := \varphi(y) \in \mathcal{R}$.
	\end{definition}
	
	\section{Main Results}\label{sec:main-results}
	
	\subsection{Optimal bounds for the smallest singular value}
	
	As in previous works on the subject, the main quantity of interest is the smallest singular value of $U_N$.
	\begin{theorem}\label{mainTheorem}
		For each $s \in \mathbb{N}$ there exists a constant $C_1=C_1(s)$ such that for any $4\tau\Delta \leq \min(\rho,\frac{1}{s^2})$, any $\boldsymbol{\xi}=(\xi_1,\dots,\xi_s)  \subset \frac{1}{s^2}\big(-\frac{\pi}{2},\frac{\pi}{2}\big]$ forming a $(\Delta,\rho,s,\ell, \tau)$-clustered configuration, and any N satisfying
		\[ \max\left(\frac{4\pi s}{\rho},4s^3\right) \leq N \leq \frac{\pi s}{\tau\Delta}\]
		we have
		\[ \sigma_{\min}(U_N(\boldsymbol{\xi})) \geq C_1\cdot{(N\Delta)}^{2\ell -1}.\]
	\end{theorem}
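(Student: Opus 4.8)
The plan is to extend the decimation technique of \cite{Vandermonde} to the confluent setting in three stages: reduce to a single cluster, pass by decimation to a confluent Vandermonde matrix with only $O_s(1)$ rows, and estimate the smallest singular value of the latter directly via a graded (divided-difference) change of basis. \textbf{Step 1 (reduction to one cluster).} Writing $U_N(\boldsymbol{\xi})\mathbf c = \tfrac{1}{\sqrt{2N}}\big(\sum_j (a_j+k\widetilde b_j)z_j^k\big)_{k=0}^{2N}$ with $\widetilde b_j:=b_j z_j^{-1}$, a summation-by-parts (Dirichlet-kernel) estimate shows that two such sampled exponential polynomials whose frequency supports lie in distinct clusters, hence are at pairwise $\|\cdot\|_{\tilde{\mathbb T}}$-distance $\geq\rho$, have $\ell^2(\{0,\dots,2N\})$ inner product at most $O\big((N\rho)^{-1}\big)$ times the product of their norms. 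Since $N\geq 4\pi s/\rho$, the Gram matrix of $U_N(\boldsymbol{\xi})$ is block-diagonally dominant across the cluster partition, so $\sigma_{\min}(U_N(\boldsymbol{\xi}))^2\geq\tfrac12\min_{\mathcal C}\sigma_{\min}(U_N(\boldsymbol{\xi}|_{\mathcal C}))^2$, and we may assume $\boldsymbol{\xi}$ is a single cluster of $\ell$ nodes in an arc of length $\leq\tau\Delta$.

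\textbf{Step 2 (decimation).} For an integer $\kappa\geq1$ and offset $q\in\{0,\dots,\kappa-1\}$, the rows $k\equiv q\pmod\kappa$ of $U_N(\boldsymbol{\xi})$, after removing a unimodular left diagonal factor and a node-wise invertible right $2\times2$ block, form a copy of $U_{N'}(\kappa\boldsymbol{\xi})$ with $N'\asymp N/\kappa$, its derivative columns carrying an additional scalar of modulus $\asymp\kappa$. Splitting $\|U_N(\boldsymbol{\xi})\mathbf c\|^2$ over the $\kappa$ residue classes, bounding the $q$-th piece below by $\sigma_{\min}(U_{N'}(\kappa\boldsymbol{\xi}))^2$ times the squared norm of its coefficient vector (value entries $a_j+q\widetilde b_j$, derivative entries of modulus $\asymp\kappa|b_j|$), and averaging over $q$ using $\tfrac1\kappa\sum_{q<\kappa}|a+qb|^2\gtrsim|a|^2+\kappa^2|b|^2$, one gets $\sigma_{\min}(U_N(\boldsymbol{\xi}))\geq c_0\,\sigma_{\min}(U_{N'}(\kappa\boldsymbol{\xi}))$ for an absolute $c_0>0$; the factor $\kappa$ on the derivative columns is exactly what turns the exponent $\ell-1$ of the non-confluent case into $2\ell-1$. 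Choosing $\kappa\asymp N/s$ (admissible since $N\geq 4s^3$ and $N\leq\pi s/(\tau\Delta)$) gives $N'\asymp s$, keeps $\kappa\tau\Delta\leq\pi$ (so $\boldsymbol{\eta}:=\kappa\boldsymbol{\xi}$ does not wrap around), makes $\boldsymbol{\eta}$ a single cluster with $\delta:=\Delta(\boldsymbol{\eta})\asymp N\Delta/s$, $N'\delta\asymp N\Delta<1$, and $N'\operatorname{diam}\boldsymbol{\eta}\leq\pi s$. It remains to prove $\sigma_{\min}(U_{N'}(\boldsymbol{\eta}))\gtrsim_s(N'\delta)^{2\ell-1}$.

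\textbf{Step 3 (graded basis).} On $U_{N'}(\boldsymbol{\eta})$ replace the columns $\{z_j^m\}$, $\{m z_j^{m-1}\}$ by those built from the Hermite divided differences of orders $0,1,\dots,2\ell-1$ at the confluent node list $(\eta_1,\eta_1,\eta_2,\eta_2,\dots)$. The order-$r$ divided difference of $z^m$ is the complete homogeneous symmetric polynomial of degree $m-r$ in the first $r+1$ nodes of that list, which to leading order equals $\binom mr z_0^{m-r}$; since in addition the order-$r$ weight vector has norm $\asymp\delta^{-r}$ with leading entry of that size, one expects $U_{N'}(\boldsymbol{\eta})$, in the new basis, to take the form $B\Lambda$ with $\Lambda$ diagonal whose entries are graded as $(N'\delta)^0,(N'\delta)^1,\dots,(N'\delta)^{2\ell-1}$ and $\operatorname{cond}(B)\lesssim_s1$. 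Granting this and using $N'\delta<1$, the last diagonal entry of $\Lambda$ dominates, giving $\sigma_{\min}(U_{N'}(\boldsymbol{\eta}))\gtrsim_s(N'\delta)^{2\ell-1}$; chaining this back through Steps 2 and 1 proves the theorem with $C_1=C_1(s)$.

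\textbf{Main obstacle.} The crux is the claim $\operatorname{cond}(B)\lesssim_s1$ in Step 3 — that the divided-difference change of basis genuinely leaves a well-conditioned residual. This requires controlling the remainder in the finite-difference expansion of $z^m$ about the cluster centre uniformly over $0\leq m\leq 2N'$ and over all admissible (possibly strongly non-uniform) intra-cluster spacings; note that the relative error can be as large as $O(N'\operatorname{diam}\boldsymbol{\eta})=O(s)$, so it is not small and must be organised along the grading. This is the confluent analogue of Proposition 2.10 of \cite{Li&Liao2020}, and it is precisely the prior reduction to $N'\asymp s$ that keeps it manageable (and that accounts for the relaxed hypotheses on $N,\Delta$). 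The remaining bookkeeping — the $2\times2$ node-wise reductions and $\kappa$-rescalings of Step 2, the summation-by-parts estimates of Step 1, and the tracking of all constants as functions of $s$ — is routine but delicate, the main pitfall being that the derivative columns of $U_N$ exceed the value columns in norm by a factor of order $N$.
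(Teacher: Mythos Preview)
Your approach differs substantially from the paper's, and Step~1 contains a genuine gap. You claim that a summation-by-parts (Dirichlet-kernel) estimate gives $|\langle f_A,f_B\rangle|\leq O((N\rho)^{-1})\|f_A\|\,\|f_B\|$ for sampled exponential polynomials $f_A,f_B$ supported on distinct clusters, and deduce block-diagonal dominance of the Gram matrix. But summation by parts controls the cross inner product only in terms of the \emph{coefficient} norms $\|c_A\|,\|c_B\|$, not the function norms $\|f_A\|,\|f_B\|$; converting between the two costs a factor $\sigma_{\min}(U_N|_{\mathcal C})^{-1}\asymp(N\Delta)^{1-2\ell}$, which is precisely the quantity you are trying to bound. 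Concretely, take $c_A$ to be the (approximate) minimal singular vector of a cluster block, so that $\|f_A\|\asymp(N\Delta)^{2\ell-1}$ while $\|c_A\|\asymp1$: then $|\langle f_A,f_B\rangle|/(\|f_A\|\,\|f_B\|)$ is of order $(N\rho)^{-1}(N\Delta)^{1-2\ell}$, not $(N\rho)^{-1}$. Hence the inequality $\sigma_{\min}(U_N)^2\geq\tfrac12\min_{\mathcal C}\sigma_{\min}(U_N|_{\mathcal C})^2$ does not follow from $N\geq4\pi s/\rho$. (A correct cross-cluster decoupling of this type is possible but requires much more---it is essentially what the paper flags as future work in its discussion of \cite{batenkov2021}.)

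The paper avoids this problem by \emph{never} reducing to a single cluster. It invokes the decimation lemma of \cite{Vandermonde} (Lemma~\ref{decemation}) to produce one integer step $m$ (a dilation $\lambda=m\Omega/N$ with $\Omega=N/s^2$) for which the blown-up nodes $e^{i\lambda\xi_j}$ are \emph{simultaneously} well separated both within clusters ($\gtrsim\Omega\Delta$) and across clusters ($\gtrsim s^{-2}$); this is exactly why the hypothesis $\boldsymbol{\xi}\subset s^{-2}(-\tfrac{\pi}{2},\tfrac{\pi}{2}]$ appears. The rows of $U_N$ are then split into $m$ arithmetic progressions of length $2s$, each yielding---after the $2\times2$ node-wise factors you describe in Step~2---a \emph{square} $2s\times2s$ confluent Vandermonde matrix $\mathbf U_{2s}(\boldsymbol\nu)$ in \emph{all} $s$ nodes. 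Its smallest singular value is then bounded below directly by Gautschi's explicit estimate for $\|\mathbf U_{2s}^{-1}\|_\infty$ (Theorem~\ref{GautchiTheorem}, Proposition~\ref{svOfsqCv}): the product $\prod_{k\neq j}\delta_{j,k}^2$ contains at most $\ell-1$ small factors of size $\asymp\Omega\Delta$, and the extra factor $\gamma_j\asymp\Omega\Delta$ supplies the remaining power, giving $(N\Delta)^{2\ell-1}$ with no divided-difference or graded-basis analysis whatsoever. Thus the obstacle you correctly identify in Step~3 simply never arises in the paper's argument.
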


	\begin{theorem}
		\label{UpperTheorem}
		For each $s \in \mathbb{N}$ there exists a constant $C_2 = C_2(\ell,\tau)$ such that for any
		$\boldsymbol{\xi}=(\xi_1,\dots,\xi_s)$ forming a $(\Delta,\rho,s,\ell,\tau)$-clustered configuration, and any $N$ satisfying $N \leq \frac{1}{2\Delta}$
		we have
		\[ \sigma_{\min}(U_N(\boldsymbol{\xi})) \leq C_2 \cdot (N\Delta)^{2\ell - 1}. \]
	\end{theorem}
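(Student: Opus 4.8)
The plan is to prove the upper bound by exhibiting, for one cluster of $\ell$ nodes, a near-null vector of $U_N(\boldsymbol\xi)$. Since $\sigma_{\min}(U_N(\boldsymbol\xi))=\min_{\|v\|=1}\|U_N(\boldsymbol\xi)v\|$, it suffices to construct $v\in\mathbb C^{2s}$, nonzero only on the $2\ell$ columns attached to a cluster $\boldsymbol\xi^{(j_0)}$ of $\ell$ nodes, with $\|U_N v\|\le C_2(\ell,\tau)(N\Delta)^{2\ell-1}\,\|v\|$. (Here $\boldsymbol\xi^{(j_0)}$ denotes a cluster of size $\ell$, which we assume exists, i.e.\ $\ell$ is the maximal cluster size, as is standard in \cite{Vandermonde,Li&Liao2020}.) Modulating the rows of $U_N$ by $\operatorname{diag}(e^{-ik\xi_{j_0}})_{k=0}^{2N}$ and the columns by a suitable unitary diagonal matrix preserves all singular values, so we may assume $\xi_{j_0}=0$; then by Definition~\ref{cluster_confg} the cluster nodes, which I relabel $\xi_1,\dots,\xi_\ell$, satisfy $|\xi_i|\le\tau\Delta$ and have pairwise distances in $[\Delta,2\tau\Delta]$.

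I then build $v$ from a \emph{confluent divided difference in the node variable}. Let $L$ be the divided-difference functional attached to the multiset $\{\xi_1,\xi_1,\dots,\xi_\ell,\xi_\ell\}$,
\[
L[f]=\sum_{i=1}^{\ell}\bigl(c_i^{(0)}f(\xi_i)+c_i^{(1)}f'(\xi_i)\bigr),\qquad
c_i^{(1)}=\prod_{m\ne i}(\xi_i-\xi_m)^{-2},\qquad
c_i^{(0)}=-2\,c_i^{(1)}\sum_{m\ne i}(\xi_i-\xi_m)^{-1},
\]
which annihilates every polynomial of degree $\le 2\ell-2$ and has $L[\xi^{2\ell-1}]=1$. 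From $k\,e^{i(k-1)\xi}=-i\,e^{-i\xi}\frac{d}{d\xi}e^{ik\xi}$ one checks that putting $v_i^{(0)}=c_i^{(0)}$ and $v_i^{(1)}=i\,e^{i\xi_i}c_i^{(1)}$ on the cluster columns (and $0$ elsewhere) gives $(U_N v)_k=\frac1{\sqrt{2N}}\,L\bigl[e^{ik(\cdot)}\bigr]$ for $k=0,\dots,2N$ (where $e^{ik(\cdot)}$ is the function $\xi\mapsto e^{ik\xi}$), and $\|v\|^2=\sum_{i=1}^{\ell}(|c_i^{(0)}|^2+|c_i^{(1)}|^2)=:\|c\|^2$.

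It remains to estimate the two sides. For the numerator, Hermite--Genocchi (valid for confluent real nodes by continuity) gives $\bigl|L[e^{ik(\cdot)}]\bigr|\le\frac1{(2\ell-1)!}\sup_{x\in\mathbb R}\bigl|\frac{d^{2\ell-1}}{dx^{2\ell-1}}e^{ikx}\bigr|=\frac{k^{2\ell-1}}{(2\ell-1)!}$; equivalently, Taylor-expanding $e^{ik\xi}$ about $0$, using that $L$ kills the degree-$(2\ell-2)$ part, and controlling the remainder through $|\xi_i|\le\tau\Delta$, $k\le 2N$ and $N\Delta\le\tfrac12$ yields $\bigl|L[e^{ik(\cdot)}]\bigr|\le\frac{k^{2\ell-1}}{(2\ell-1)!}e^{k\tau\Delta}\le\frac{e^{\tau}k^{2\ell-1}}{(2\ell-1)!}$; it is this second, more flexible, control of the remainder that also gives the claimed relaxation of the hypotheses in \cite[Prop.~2.10]{Li&Liao2020}. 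Summing, $\|U_N v\|^2=\frac1{2N}\sum_{k=0}^{2N}\bigl|L[e^{ik(\cdot)}]\bigr|^2\le C(\ell,\tau)\,N^{2(2\ell-1)}$. For the denominator, I choose $\xi_1$ to be the \emph{leftmost} cluster node, so that $\xi_1-\xi_m$ has a fixed sign for all $m\ne1$ and no cancellation occurs; using $|\xi_1-\xi_m|\le2\tau\Delta$ and $\ell\ge2$,
\[
\Bigl|\sum_{m\ne1}(\xi_1-\xi_m)^{-1}\Bigr|=\sum_{m\ne1}|\xi_1-\xi_m|^{-1}\ge(2\tau\Delta)^{-1},
\qquad
|c_1^{(1)}|=\prod_{m\ne1}|\xi_1-\xi_m|^{-2}\ge(2\tau\Delta)^{-2(\ell-1)},
\]
so $\|v\|=\|c\|\ge|c_1^{(0)}|=2|c_1^{(1)}|\bigl|\sum_{m\ne1}(\xi_1-\xi_m)^{-1}\bigr|\ge2\,(2\tau\Delta)^{-(2\ell-1)}$. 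Dividing gives $\sigma_{\min}(U_N)\le\|U_N v\|/\|v\|\le C_2(\ell,\tau)(N\Delta)^{2\ell-1}$.

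The step I expect to be the main obstacle is the lower bound on $\|c\|$. The essential point is that the \emph{value} coefficients $c_i^{(0)}$ grow like $\Delta^{-(2\ell-1)}$, one full power of $\Delta^{-1}$ faster than the \emph{derivative} coefficients $c_i^{(1)}\asymp\Delta^{-2(\ell-1)}$, and this extra power is exactly what upgrades the simple-Vandermonde exponent $\ell-1$ to $2\ell-1$. Extracting it forces the extremal choice of $\xi_1$: for a node in the interior of the cluster the sum $\sum_{m\ne1}(\xi_1-\xi_m)^{-1}$ can be as small as $O(1)$, which would lose a factor $\Delta$ and destroy the rate. A secondary technical point — the one relevant to the improvement of \cite[Prop.~2.10]{Li&Liao2020} — is that the remainder estimate in the numerator has to be carried through for an arbitrary clustered node vector under only the mild hypothesis $N\Delta\le\tfrac12$, rather than for a single tightly packed cluster.
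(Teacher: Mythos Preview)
Your argument is correct and follows the same high-level strategy as the paper --- build a near-null vector supported on the $2\ell$ columns of one cluster from a confluent divided-difference functional --- but your execution is different, and in fact more elementary. For the numerator, the paper passes through the Dirichlet kernel, Parseval, and Bernstein's inequality, together with an explicit $L^2$ analysis of the Taylor remainders $R_A,R_B$; you bypass all of this with Hermite--Genocchi, which immediately gives $|L[e^{ik(\cdot)}]|\le k^{2\ell-1}/(2\ell-1)!$ entrywise and hence $\|U_N v\|\le C(\ell)N^{2\ell-1}$ after summation. For the denominator, the paper lower-bounds $\|u\|$ by writing the coefficient vector as $L^{-1}b$ and using $\|L^{-1}b\|_2\ge \|b\|_2/\|L\|_2$ with a crude bound on $\|L\|_\infty$; you instead use the explicit partial-fraction formulas for the confluent divided-difference weights and the observation that, at the \emph{leftmost} cluster node, the sum $\sum_{m\ne1}(\xi_1-\xi_m)^{-1}$ has no cancellation, which directly produces $|c_1^{(0)}|\gtrsim (\tau\Delta)^{-(2\ell-1)}$. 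The paper's matrix-norm route is more robust to generalization (it does not rely on identifying an extremal node), while yours is shorter and makes transparent exactly why the value-coefficients gain the extra power of $\Delta^{-1}$ over the derivative-coefficients. Both approaches share the implicit assumption that a cluster of size exactly $\ell$ is present, which the paper also makes without comment.
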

	
	The proofs of the above results are given in Sections \ref{sec:proof-of-main-thm} and \ref{sec:proof-of-upper-sing}, respectively. For the lower bound, we extend the decimation technique from \cite{Vandermonde} to the confluent setting.  For the upper bound we generalize the approach from \cite{Li&Liao2020} to hold \emph{for any clustered configuration} $\boldsymbol{\xi}$. Furthermore, our proof technique can be used to slightly improve the condition (2.9) in Proposition 2.10 in \cite{Li&Liao2020}
	by requiring only that $N\Delta \leq \operatorname{const}$ instead of $N^{3/2}\Delta \leq \operatorname{const}$.
	
	\subsection{Stable super-resolution of generalized spikes of order 1}
	In our setting, we assume that the spike locations are restricted to a discrete grid of step size $\Delta$. In effect, our results  show that $\mathcal{E} \asymp SRF^{4\ell-1}\varepsilon$ as $SRF\to\infty$.
	
	\begin{theorem}\label{theorem:min-max-error}
		Fix $s \geq 1, 2\leq\ell\leq s, \varepsilon > 0.$ Put $SRF := \frac{1}{N\Delta}$. Then the following hold:
		\begin{enumerate}
			\item
			For any $\rho \geq 0$, $\ell -1 \leq \tau$, and $M \geq \pi$, there exists $\Delta_0=\Delta_0(M)$ and $K$ $\geq \frac{1}{2s\pi}$ such that for every $SRF$ satisfying $K \leq SRF \leq (2sM) \cdot K$ and for all $\Delta \leq \Delta_0$, it holds that
			\[\mathcal{E}(\hat{\mathcal{R}}(\Delta,\rho,s,\ell,\tau),N,\varepsilon) \leq C_{s,\ell}{SRF}^{4\ell-1}\varepsilon \]
			for some constant $C_{s,\ell}$ depending only on s and $\ell$, where $\hat{\mathcal{R}}:=\Big\{\mu: \mu \in \mathcal{R}, \supp(\mu) \subset \frac{1}{2\pi s^2}\big(-\frac{\pi}{2},\frac{\pi}{2}\big] \Big\}$. 
			\item
			For any $\rho \geq 0$, $\ell -1 \leq \tau$ and $SRF \geq 2$, it holds that
			\[\mathcal{E}(\mathcal{R}(\Delta,\rho,s,\ell,\tau),N,\varepsilon) \geq C_{\ell,\tau}{SRF}^{4\ell-1}\varepsilon\]
			for some constant $C_{\ell,\tau}$ depending only on $\ell$ and $\tau$.
		\end{enumerate}
	\end{theorem}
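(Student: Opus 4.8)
The plan is to handle both parts by the standard achievability/converse dichotomy, reducing the $\ell^2$ error in each case to the smallest singular value of a confluent Vandermonde matrix built on (a subset of) the union of the relevant supports. The exponent $4\ell-1 = 2(2\ell)-1$ will come from invoking Theorems \ref{mainTheorem} and \ref{UpperTheorem} with cluster parameter $2\ell$: the difference of two $\ell$-clustered signals carries at most $2\ell$ nodes in any cluster region (at most one cluster from each signal).

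For Part 1, take as decoder any $\varphi$ sending $y$ to some $\varphi_y \in \hat{\mathcal R}$ with $\tfrac{1}{\sqrt{2N}}\|\hat{\varphi_y}-y\|_2 < \varepsilon$; such $\varphi_y$ exists since the true $\mu$ is itself feasible. For $\mu \in \hat{\mathcal R}$ and $\nu := \varphi_y$, the triangle inequality in the data ball gives $\tfrac{1}{\sqrt{2N}}\|\hat\mu-\hat\nu\|_2 < 2\varepsilon$. Writing $\mu-\nu$ as a generalized spike distribution on $\boldsymbol\zeta \subseteq \supp\mu \cup \supp\nu$ (a grid set of at most $2s$ points), and letting $c$ be its coefficient vector after the obvious unitary rescaling of the $\delta'$-columns, one has $\|c\|_2 = \|\mu-\nu\|_2$ and $\tfrac{1}{\sqrt{2N}}(\hat\mu-\hat\nu) = U_N(\boldsymbol\zeta)c$, so $\|\mu-\nu\|_2 \le 2\varepsilon/\sigma_{\min}(U_N(\boldsymbol\zeta))$. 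The crux is the estimate $\sigma_{\min}(U_N(\boldsymbol\zeta)) \ge c_0(s,\ell)(N\Delta)^{4\ell-1}$. The key point is combinatorial: once $\Delta \le \Delta_0$ makes $4\tau\Delta \le \rho$, any window shorter than $\rho$ meets at most one cluster of each of $\mu,\nu$, so $\boldsymbol\zeta$ carries at most $2\ell$ points in such a window; after coarsening nearby clusters this presents $\boldsymbol\zeta$ as a $(\Delta,\rho'',2s,2\ell,\tau'')$-clustered configuration in a sufficiently small central interval, to which Theorem \ref{mainTheorem} applies with $2s,2\ell$ replacing $s,\ell$. This is precisely where $SRF=\beta$ enters: $\beta$ (a large constant, possibly depending on $\rho,\tau,s,\ell$ but with $\beta\ge M$) is chosen so that the upper constraint $N \le \pi(2s)/(\tau''\Delta)$ of Theorem \ref{mainTheorem} holds for all small $\Delta$, while $\Delta_0$ is chosen so the remaining range constraints and the interval containment hold; since the constant $C_1(2s)$ there depends only on the node count, the resulting $C_{s,\ell}$ depends only on $s,\ell$. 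Taking suprema over $\mu\in\hat{\mathcal R}$ and $y\in B^N_\varepsilon(\mu)$ and the infimum over decoders gives Part 1.

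For Part 2 I build two indistinguishable hypotheses. Let $P=\{0,\Delta,\dots,(2\ell-1)\Delta\}\subset\mathcal T_\Delta$, split as $P=P_1\sqcup P_2$ into its first and last $\ell$ points (each spanning $(\ell-1)\Delta\le\tau\Delta$). Since $N\Delta\le\tfrac12$, Theorem \ref{UpperTheorem} applied to the single cluster $P$ (with parameters $2\ell,2\ell,2\ell-1$) yields a unit vector $v$ with $\|U_N(P)v\|_2=\sigma_{\min}(U_N(P))\le K(N\Delta)^{4\ell-1}$, $K=K(\ell)$. Put $R:=\varepsilon/\bigl(K(N\Delta)^{4\ell-1}\bigr)$ and, after undoing the unitary rescaling, use the $P_1$-block of $Rv$ as the coefficients of $\mu_1$ and the negated $P_2$-block as those of $\mu_2$, completing each to an $s$-node element of $\mathcal R(\Delta,\rho,s,\ell,\tau)$ with the same zero-coefficient filler nodes placed far from $P$ and from each other (at mutual and inter-cluster distance $\ge\max(\rho,\Delta)$, possible whenever $\mathcal R$ is nonempty). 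Then $\mu_1-\mu_2$ is supported on $P$ with $\|\mu_1-\mu_2\|_2=R$ and $\tfrac{1}{\sqrt{2N}}\|\hat\mu_1-\hat\mu_2\|_2=R\,\sigma_{\min}(U_N(P))\le\varepsilon$, so $y:=\tfrac12(\hat\mu_1+\hat\mu_2)$ lies in $B^N_\varepsilon(\mu_1)\cap B^N_\varepsilon(\mu_2)$. For any decoder $\varphi$, $\|\varphi_y-\mu_1\|_2+\|\varphi_y-\mu_2\|_2\ge\|\mu_1-\mu_2\|_2$, hence $\sup_{\mu,y}\|\varphi_y-\mu\|_2 \ge \tfrac12 R = C_{\ell,\tau}SRF^{4\ell-1}\varepsilon$; taking the infimum over $\varphi$ proves Part 2.

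The main obstacle is the singular-value lower bound for the union in Part 1: verifying that the cluster sizes, separations and interval containment of $\boldsymbol\zeta$ can be matched to the precise hypotheses of Theorem \ref{mainTheorem} with counts $2s,2\ell$ — or, short of a literal reduction, re-running the decimation argument directly for node sets obeying only the local density bound ``at most $2\ell$ points per short window'' — together with pinning down the constant $\beta$ and the threshold $\Delta_0$ that make all the range conditions simultaneously compatible. Part 2 should be essentially routine once the construction above is in place.
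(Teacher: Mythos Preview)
Your proposal is correct and follows essentially the same achievability/converse scheme as the paper, including the key idea of passing to cluster parameter $2\ell$ for the difference $\varphi_y-\mu$ (Part~1) and for the two-hypothesis construction (Part~2). Two remarks are worth making. First, in Part~1 your combinatorial sketch (``once $4\tau\Delta\le\rho$, any window shorter than $\rho$ meets at most one cluster of each'') does not go through as stated because the theorem allows $\rho=0$; the paper instead invokes an external result (Lemma~4.7 in \cite{Vandermonde}) which, using only the grid structure, produces new parameters $s'\le 2s$, $\ell'\le 2\ell$, $\tau'\ge 1$ and $\rho'=8sM\tau'\Delta$ for $\supp(\varphi_y-\mu)$, and then sets $\beta=M\tau'\tfrac{2s}{\pi s'}$ so that the range condition $N\le \pi s'/(\tau'\Delta)$ of Theorem~\ref{mainTheorem} is met --- this is exactly the obstacle you flagged, and it is resolved by that lemma rather than by a smallness assumption on $\tau\Delta/\rho$. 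Second, your Part~2 is a slight specialization of the paper's: the paper takes an arbitrary $(\Delta,\rho,2s,2\ell,\tau)$-configuration and splits it into two $(\Delta,\rho,s,\ell,\tau)$-configurations, whereas you take a single $2\ell$-cluster and pad with zero-coefficient nodes; and the paper uses $y=\hat\mu_1$ rather than the midpoint. Both variants work and yield the same bound.
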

	
	The proof is presented in Section \ref{sec:proof-minmax}, largely repeating the arguments from \cite{Vandermonde,Li&Liao2020}, together with  the above established bounds on $\sigma_{\min}(U_N)$.
	
	\section{Proofs}\label{sec:proofs}
	
	\subsection{Square confluent Vandermonde matrices}

	\begin{definition}
		\label{sqrconfluentVan}
		For $s \in 	\mathbb{N}$ and vector 
		$\boldsymbol{z} = (z_1, . . . , z_s)$ of pairwise distinct complex
		nodes $|{z_j}| = 1$, we define the square $2s \times 2s$ confluent Vandermonde matrix
		\[ \mathbf{U}_{2s} (\boldsymbol{z}) := [z_j^{k} \quad {kz_j^{k-1}}]^{j=1,\dots,s} _{k=0,\dots, 2s-1} \]
	\end{definition}

	\begin{theorem}[\cite{Gautschi1962}]
		\label{GautchiTheorem}
		Let $\boldsymbol{x} = (x_1, . . . , x_n)$ be a vector of pairwise distinct complex numbers and let
		
		\[ b_\lambda = \max\Big(1+|x_\lambda|,1+2\big(1+|x_\lambda|\big)\sum^n_{\nu=1 \neq \lambda}{\frac{1}{|x_\nu - x_\lambda|}}\Big) \ . \]
		Then
		
		\[ \big\|\mathbf{U}_{2n}^{-1}(\boldsymbol{x})\big\|_\infty \leq \max_{1 \leq \lambda \leq n}{\,b_\lambda \Big(\prod_{\nu=1 \neq \lambda}^n{\frac{1+|x_\nu|}{|x_\nu - x_\lambda|}}\,\Big)^2} \]
		
	\end{theorem}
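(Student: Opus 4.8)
The plan is to reconstruct the classical argument behind Gautschi's bound, which identifies the rows of $\mathbf{U}_{2n}^{-1}(\boldsymbol{x})$ with the coefficient vectors of the Hermite fundamental polynomials associated to the nodes $x_1,\dots,x_n$, and then estimates the $\ell_1$ norm of these coefficient vectors using the submultiplicativity of the coefficient norm under polynomial multiplication.

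First I would set up the duality. Reading the identity $\mathbf{U}_{2n}^{-1}\mathbf{U}_{2n}=I$ column by column, one sees that the row of $\mathbf{U}_{2n}^{-1}$ indexed by the pair $(\lambda,0)$ consists exactly of the coefficients $c_0,\dots,c_{2n-1}$ of the unique polynomial $P_{\lambda,0}(x)=\sum_{k=0}^{2n-1} c_k x^k$ satisfying $P_{\lambda,0}(x_j)=\delta_{\lambda j}$ and $P_{\lambda,0}'(x_j)=0$ for all $j$, while the row indexed by $(\lambda,1)$ gives the coefficients of the polynomial $P_{\lambda,1}$ with $P_{\lambda,1}(x_j)=0$ and $P_{\lambda,1}'(x_j)=\delta_{\lambda j}$ for all $j$. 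These are precisely the two Hermite basis polynomials attached to the node $x_\lambda$. Since $\|\mathbf{U}_{2n}^{-1}\|_\infty$ is the maximum over rows of the sum of the absolute values of the entries, it equals $\max_{1\le\lambda\le n}\max\bigl(\nu(P_{\lambda,0}),\nu(P_{\lambda,1})\bigr)$, where $\nu(p)$ denotes the sum of the moduli of the coefficients of the polynomial $p$.

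Next I would invoke the closed forms $P_{\lambda,0}(x)=\bigl(1-2\ell_\lambda'(x_\lambda)(x-x_\lambda)\bigr)\ell_\lambda(x)^2$ and $P_{\lambda,1}(x)=(x-x_\lambda)\ell_\lambda(x)^2$, where $\ell_\lambda(x)=\prod_{\nu\ne\lambda}\frac{x-x_\nu}{x_\lambda-x_\nu}$ is the Lagrange basis polynomial and $\ell_\lambda'(x_\lambda)=\sum_{\nu\ne\lambda}\frac{1}{x_\lambda-x_\nu}$. The functional $\nu(\cdot)$ is an algebra norm, i.e.\ $\nu(pq)\le\nu(p)\nu(q)$, with $\nu(x-a)=1+|a|$; this gives $\nu(\ell_\lambda)\le\prod_{\nu\ne\lambda}\frac{1+|x_\nu|}{|x_\nu-x_\lambda|}$, while the triangle inequality and $|\ell_\lambda'(x_\lambda)|\le\sum_{\nu\ne\lambda}\frac{1}{|x_\nu-x_\lambda|}$ yield $\nu\bigl(1-2\ell_\lambda'(x_\lambda)(x-x_\lambda)\bigr)\le 1+2(1+|x_\lambda|)\sum_{\nu\ne\lambda}\frac{1}{|x_\nu-x_\lambda|}$, and trivially $\nu(x-x_\lambda)=1+|x_\lambda|$. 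Multiplying through and bounding each of the two leading factors by $b_\lambda$ produces $\max\bigl(\nu(P_{\lambda,0}),\nu(P_{\lambda,1})\bigr)\le b_\lambda\Bigl(\prod_{\nu\ne\lambda}\frac{1+|x_\nu|}{|x_\nu-x_\lambda|}\Bigr)^2$, and taking the maximum over $\lambda$ is the asserted inequality.

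The only point requiring genuine care is the first step: getting the row/column indexing correct so that the \emph{rows} (not columns) of the inverse are the coefficient vectors, and verifying that $P_{\lambda,0},P_{\lambda,1}$ indeed lie in the span of $1,x,\dots,x^{2n-1}$ — both have degree exactly $2n-1$, since $\ell_\lambda$ has degree $n-1$. The remaining estimates are elementary consequences of submultiplicativity of $\nu$. As this is a known result, one may alternatively simply cite \cite{Gautschi1962} and omit the argument entirely.
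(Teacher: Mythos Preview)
Your reconstruction is correct and is precisely Gautschi's original argument: identify the rows of $\mathbf{U}_{2n}^{-1}$ with the coefficient vectors of the Hermite fundamental polynomials $P_{\lambda,0},P_{\lambda,1}$, then bound their coefficient $\ell_1$-norms via the submultiplicativity of $\nu(\cdot)$ applied to the explicit factorizations through $\ell_\lambda(x)^2$. Note, however, that the paper does not supply its own proof of this theorem at all --- it is stated as a quotation of \cite{Gautschi1962} and used as a black box --- so there is nothing to compare against; your final remark that one may simply cite the reference is exactly what the authors do.
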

	
	\begin{proposition}\label{svOfsqCv}
		Let $\boldsymbol{z} = (z_1,\dots,z_s)$ be a vector of pairwise distinct complex
		nodes with $|{z_j}| = 1, j=1,\dots,s$.
		Denote by $\delta_{j,k}$ the angular distance between $z_j$ and $z_k$:
		\[\delta_{j,k} := \delta_{j,k}(\boldsymbol{z}) = \big|\operatorname{Arg}\Big(\frac{z_j}{z_k}\big)\Big| = \Big|\operatorname{Arg}(z_j) - \operatorname{Arg}(z_k) \enspace mod (-\pi,\pi]\Big| \ . \]
		Then
		\[\sigma_{\min}\big(\mathbf{U}_{2s}(\boldsymbol{z})\big) \geq \frac{4^{2(1-s)}}{\sqrt{2s}\pi^{2(1-s)}}\min_{1 \leq j \leq s}\gamma_j\prod_{k \neq j}^{ }\delta_{j,k}^2\ ,\]
		where
		\[\gamma_j = \min\Big(\frac{1}{2}, \big(1 +\frac{8}{\pi}\sum_{k \neq j}\delta_{j,k}^{-1}\big)^{-1}\Big)\]
	\end{proposition}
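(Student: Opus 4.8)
The natural route is to pass to the inverse matrix and invoke Gautschi's Theorem~\ref{GautchiTheorem}. Since the nodes $z_j$ are pairwise distinct, $\mathbf{U}_{2s}(\boldsymbol z)$ is invertible (its determinant equals the confluent Vandermonde determinant $\prod_{j<k}(z_j-z_k)^{4}\neq 0$), so $\sigma_{\min}(\mathbf{U}_{2s}(\boldsymbol z))=\|\mathbf{U}_{2s}^{-1}(\boldsymbol z)\|_2^{-1}$, and the elementary norm comparison $\|B\|_2\le\sqrt m\,\|B\|_\infty$ for an $m\times m$ matrix (here $m=2s$) reduces the whole problem to an upper bound on $\|\mathbf{U}_{2s}^{-1}(\boldsymbol z)\|_\infty$.

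Applying Theorem~\ref{GautchiTheorem} with $x_\nu=z_\nu$ and using $|z_\nu|=1$, so that $1+|z_\nu|=2$, gives
\[
\|\mathbf{U}_{2s}^{-1}(\boldsymbol z)\|_\infty\ \le\ \max_{1\le\lambda\le s}\ b_\lambda\Bigl(\prod_{\nu\ne\lambda}\tfrac{2}{|z_\nu-z_\lambda|}\Bigr)^{2},\qquad
b_\lambda=\max\!\Bigl(2,\ 1+4\sum_{\nu\ne\lambda}\tfrac{1}{|z_\nu-z_\lambda|}\Bigr).
\]
The next step is to translate chordal distances into angular ones: for unit-modulus nodes $|z_\nu-z_\lambda|=2\sin(\delta_{\nu,\lambda}/2)$ with $\delta_{\nu,\lambda}/2\in(0,\pi/2]$, and Jordan's inequality $\sin\theta\ge\tfrac{2}{\pi}\theta$ on $(0,\pi/2]$ yields $\tfrac{2}{|z_\nu-z_\lambda|}\le\tfrac{\pi}{\delta_{\nu,\lambda}}$ and $\tfrac{1}{|z_\nu-z_\lambda|}\le\tfrac{\pi}{2\delta_{\nu,\lambda}}$. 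Substituting, $\bigl(\prod_{\nu\ne\lambda}\tfrac{2}{|z_\nu-z_\lambda|}\bigr)^{2}\le\pi^{2(s-1)}\prod_{\nu\ne\lambda}\delta_{\nu,\lambda}^{-2}$ and $b_\lambda\le\max\bigl(2,\,1+2\pi\sum_{\nu\ne\lambda}\delta_{\nu,\lambda}^{-1}\bigr)$, and feeding these back through the two displays above produces
\[
\sigma_{\min}(\mathbf{U}_{2s}(\boldsymbol z))\ \ge\ \frac{\pi^{2(1-s)}}{\sqrt{2s}}\ \min_{1\le j\le s}\ \frac{1}{b_j}\prod_{k\ne j}\delta_{j,k}^{2}.
\]

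It then remains to verify $b_j^{-1}\ge\gamma_j$, equivalently $b_j\le\gamma_j^{-1}=\max\bigl(2,\tfrac{5\pi}{2}\sum_{k\ne j}\delta_{j,k}^{-1}\bigr)$. The structure of $\gamma_j$ as a minimum mirrors the maximum defining $b_j$: the value $\tfrac12$ in $\gamma_j$ matches the branch $b_j=2$ (small distance-sum), while on the complementary branch one exploits $\delta_{j,k}\le\pi$, hence $\sum_{k\ne j}\delta_{j,k}^{-1}\ge (s-1)/\pi$, to absorb the additive $1$ in $b_j$ into the sum at the cost of inflating the coefficient from $2\pi$ to $\tfrac52\pi$.

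I expect this last step, the constant-tracking, to be the only genuine difficulty: both Gautschi's $b_\lambda$ and the $\sqrt{2s}$ comparison are lossy, so the argument must be arranged carefully --- by a case split on whether $\sum_{k\ne j}\delta_{j,k}^{-1}$ lies above or below a threshold of order one, and by not bounding the additive constant in $b_j$ through the same crude per-term estimate used for the product term --- so that precisely the constant $\tfrac{2}{5\pi}$ survives. The degenerate case $s=1$ (empty products and sums, so $\gamma_1=\tfrac12$) is checked directly from the $2\times 2$ matrix $\mathbf{U}_2(z_1)$.
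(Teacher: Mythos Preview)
Your approach is essentially identical to the paper's: apply Gautschi's bound to $\|\mathbf{U}_{2s}^{-1}\|_\infty$, convert chordal distances $|z_j-z_k|=2\sin(\delta_{j,k}/2)$ to angular ones via Jordan's inequality, and pass to $\sigma_{\min}$ through the $\sqrt{2s}$ norm comparison. The paper simply \emph{defines} $\gamma_j:=b_j^{-1}$ after making the same simplifications (writing $b_j=\max\bigl(2,\tfrac{5\pi}{2}\sum_{k\neq j}\delta_{j,k}^{-1}\bigr)$) and does not spell out the case analysis you anticipate for the additive constant.
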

	
	\begin{proof}
		By Theorem \ref{GautchiTheorem} we have
		\begin{equation}\label{eq:a}
			\big\|\mathbf{U}_{2s}^{-1}(\boldsymbol{z})\big\|_\infty \leq 2^{2(s-1)}\max_{1 \leq j \leq s}{\,b_j \prod_{k \neq j}{\big|z_k - z_j\big|}^{-2}} \ ,
		\end{equation}
		where
		\[ b_j = \max\big(2,1+4\sum^s_{k \neq j}{\big|z_j - z_k\big|^{-1}}\big) \ . \]
		For any $|\theta| \leq \frac{\pi}{2}$, we have
		\[ \frac{\pi}{2}|\theta|  \leq \sin{|\theta|}  \leq |\theta| \]
		and since for any $z_j \neq z_k$
		\[ \big|z_j - z_k\big| = \Big|1 - \frac{z_j}{z_k}\Big| = 2\sin{\Big|\frac{1}{2}\operatorname{Arg}\Big(\frac{z_j}{z_k}\Big)\Big|} = 2\sin{\Big|\frac{\delta_{j,k}}{2}\Big|}\ , \]
		we therefore obtain
		\[ \frac{\pi}{2}\delta_{j,k}  \leq \big|z_j - z_k\big|  \leq \delta_{j,k}. \]

		Plugging into \eqref{eq:a}, we have
		
		\[ \sigma_{\max}\big(\mathbf{U}_{2s}^{-1}(\boldsymbol{z})\big) \leq \sqrt{2s}\big\|\mathbf{U}_{2s}^{-1}(\boldsymbol{z})\big\|_\infty \leq \left( {\frac{4}{\pi}}\right)^{2(s-1)}\sqrt{2s}\max_{1 \leq j \leq s}{\,b_j \prod_{k \neq j}^s{\delta_{j,k}^{-2}}} \]
		and
		\[ b_j = \max\big(2, 1+ \frac{8}{\pi}\sum^s_{k \neq j}{\delta_{j,k}^{-1}}\big). \]
		This finishes the proof with $\gamma_j := b_j^{-1}$.
	\end{proof}

	\subsection{Proof of Theorem \ref{mainTheorem}. }\label{sec:proof-of-main-thm}
	\subsubsection{Overview of the proof}
	First we use the \textbf{Decimation} technique that has first been introduced in \cite{Vandermonde}. It states that there exists a certain blow-up factor $\lambda$ such that the mapped nodes $\{e^{i\lambda x_j}\}$ attain "good" separation properties.
	Second, for any such $\lambda$ of order $O(N)$, we can partition the rectangular confluent Vandermonde matrix into squared well-conditioned confluent matrices and use this partition to bound $\sigma_{\min}$ from below.
	
	In order to use the corresponding results from \cite{Vandermonde}, we introduce an auxiliary bandwidth parameter $\Omega$.
	
	\begin{definition}
		\label{bandLimReconfVan}
		For $N, s \in 	\mathbb{N}$, a vector 
		$\boldsymbol{x} = (x_1, . . . , x_s)$ of pairwise distinct real
		nodes $x_j \in \big( - \frac{\pi}{2} , \frac{\pi}{2} \big]$, and a bandwidth parameter $0 < \Omega \leq 2N$, let $\boldsymbol{\xi} = (\xi_1, \dots , \xi_s)$ where $\xi_j = \frac{x_j \Omega}{N}$. Then we define
		\[ U_N (\boldsymbol{x}, \Omega) := U_N (\boldsymbol{\xi}) = U_N \big(\frac{\Omega}{N}\boldsymbol{x}\big) = \frac{1}{\sqrt{2N}}\Big[\exp\big(ik\frac{x_j\Omega}{N}\big) \quad k\exp\big(i(k-1)\frac{x_j\Omega}{N}\big)\Big]^{j=1,\dots,s} _{k=0,\dots, 2N} \in \mathbb{C}^{(2N+1)\times (2s)}.\]
		
	\end{definition}
	\subsubsection{The existence of an admissible decimation}
	
	We can now use a key result from \cite{Vandermonde}.
	\begin{lemma}[Lemma 4.1 in \cite{Vandermonde}]\label{decemation}
		Let $\boldsymbol{x}$ form a $(\Delta, \rho, s, \ell, \tau)$ clustered configuration, and suppose that $\frac{4\pi s}{\rho}\leq\Omega\leq\frac{\pi s}{\tau\Delta}$. Then, for any $0 \leq \xi \leq 1$, there exists a set $I \subset \Big[\frac{\Omega}{2s}, \frac{\Omega}{s}\Big]$ of total measure $\frac{\Omega}{2s}
		\xi$ such that for every $\lambda \in I$, the following holds for every $x_j \in \boldsymbol{x}$:
		\begin{enumerate}
			\item
			\[ \|\lambda y - \lambda x_j\|_{\tilde{\mathbb{T}}} \geq \lambda\Delta \geq \frac{\Delta\Omega}{2s} \quad \quad \forall y \in \boldsymbol{x}^{(j)} \setminus \{x_j\}\]
			\item
			\[ \|\lambda y - \lambda x_j\|_{\tilde{\mathbb{T}}} \geq \frac{1-\xi}{s^2}\pi \quad \quad \forall y \in \boldsymbol{x}\setminus \boldsymbol{x}^{(j)}\]
		\end{enumerate} 
		Furthermore, the set $I^c := \Big[\frac{\Omega}{2s}, \frac{\Omega}{s}\Big] \setminus I$ is a union of at most $\frac{s^2}{2}\Big\lceil\frac{\Omega}{4s}\Big\rceil$ intervals.
	\end{lemma}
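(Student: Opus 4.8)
The plan is to realise the required set $I$ as the complement, inside $J:=\big[\tfrac{\Omega}{2s},\tfrac{\Omega}{s}\big]$, of a family of ``bad'' intervals, one family for each pair of nodes lying in distinct clusters. I would begin with an elementary reduction: since $\boldsymbol{x}\subset(-\tfrac\pi2,\tfrac\pi2]$, every pairwise difference obeys $|x_i-x_j|=\|x_i-x_j\|_{\tilde{\mathbb{T}}}=:d_{i,j}<\pi$, and because $\|\cdot\|_{\tilde{\mathbb{T}}}$ is even, $\|\lambda x_i-\lambda x_j\|_{\tilde{\mathbb{T}}}=\|\lambda\, d_{i,j}\|_{\tilde{\mathbb{T}}}$ for $\lambda>0$. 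Thus both conclusions of the lemma become statements about the scalar maps $\lambda\mapsto\|\lambda d\|_{\tilde{\mathbb{T}}}$.

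The within-cluster estimate (item 1) will require no exclusion at all: if $y\in\boldsymbol{x}^{(j)}\setminus\{x_j\}$ then $d:=d_{j,y}\in[\Delta,\tau\Delta]$, and the hypothesis $\Omega\le\pi s/(\tau\Delta)$ forces $\lambda d\le(\Omega/s)\tau\Delta\le\pi$ for all $\lambda\in J$, so no wraparound occurs and $\|\lambda d\|_{\tilde{\mathbb{T}}}=\lambda d\ge\lambda\Delta\ge\Omega\Delta/(2s)$. The cross-cluster estimate (item 2) is the heart of the matter. Fix a pair with $d:=d_{j,y}\ge\rho$ and set $c:=(1-\xi)\pi/s^2$; the bad set $B_{j,y}:=\{\lambda\in J:\|\lambda d\|_{\tilde{\mathbb{T}}}<c\}$ is precisely the preimage under $\lambda\mapsto\lambda d$ of the $2\pi$-periodic union $\bigcup_{m\in\mathbb{Z}}(2\pi m-c,\,2\pi m+c)$, hence a union of intervals in $\lambda$. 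I would bound its measure by the change of variables $u=\lambda d$: the image of $J$ is an interval of length $T:=\Omega d/(2s)$, which is at least $2\pi$ because $\Omega\ge4\pi s/\rho$ and $d\ge\rho$, so the periodic bad set meets it in measure at most $(T/(2\pi)+1)\cdot 2c$; dividing by $d$, using $T/d=\Omega/(2s)$ for the main term and $2\pi/d\le2\pi/\rho\le\Omega/(2s)$ for the remainder, yields $|B_{j,y}|\le\Omega(1-\xi)/s^3$.

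Since $B_{j,y}$ depends only on the symmetric quantity $d_{j,y}$, there are at most $\binom{s}{2}<s^2/2$ distinct such sets, so $B:=\bigcup B_{j,y}$ has measure at most $\Omega(1-\xi)/(2s)$, and $I:=J\setminus B$ has measure at least $\Omega\xi/(2s)$ (one may pass to a subset if an exact value is demanded) while satisfying items 1 and 2 for every $\lambda\in I$ by construction. For the interval count: since $d<\pi$, the same image interval has length $T<\Omega\pi/(2s)$, so it meets at most $\lceil\Omega/(4s)\rceil$ multiples of $2\pi$; hence each $B_{j,y}$ has at most $\lceil\Omega/(4s)\rceil$ components and $I^c=B$ is a union of at most $\tfrac{s^2}{2}\lceil\Omega/(4s)\rceil$ intervals.

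I expect the main difficulty to be bookkeeping rather than conceptual: the single hypothesis $4\pi s/\rho\le\Omega$ must simultaneously ensure that the rescaled window $\lambda d$ spans at least one full period (so that the periodic-averaging estimate is meaningful) and that the ``extra partial period'' boundary term is absorbed into the main term, while $\Omega\le\pi s/(\tau\Delta)$ must rule out wraparound inside clusters; making all of these land with the advertised constants $\tfrac{\Omega}{2s}\xi$ and $\tfrac{s^2}{2}\lceil\tfrac{\Omega}{4s}\rceil$ needs care, as does the observation that the bad sets should be indexed by unordered pairs in order not to lose a factor of two.
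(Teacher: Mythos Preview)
The present paper does not supply a proof of this lemma: it is quoted directly from \cite{Vandermonde} and invoked as a black box in the decimation argument. Your proposal reconstructs what is, as far as I can tell, precisely the intended argument of the original reference: item~1 holds on all of $J$ because the upper bound $\Omega\le\pi s/(\tau\Delta)$ prevents wraparound for in-cluster pairs, while item~2 is obtained by excising, for each unordered pair at distance $\ge\rho$, the preimage of a $c$-neighbourhood of $2\pi\mathbb{Z}$ and bounding its measure and component count via the lower bound $\Omega\ge 4\pi s/\rho$. The change of variables $u=\lambda d$ and the periodic-averaging estimate you describe are exactly the right mechanism.

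Two small points of bookkeeping, both of which you already flag. First, your construction gives $|I|\ge\tfrac{\Omega}{2s}\xi$ rather than equality; shrinking $I$ to force equality is harmless for the downstream use (Proposition~\ref{pn} needs only the lower bound on $|I|$ together with the interval count on $I^c$), though strictly speaking it may add one interval to $I^c$. Second, in the component count you identify components of $B_{j,y}$ with multiples of $2\pi$ lying \emph{in} the image interval, but a component arises whenever $2\pi m$ lies within distance $c$ of the image, so there can be up to two extra boundary components per pair. Whether the slack $\binom{s}{2}<\tfrac{s^2}{2}$ absorbs these extras so as to land exactly on the advertised constant $\tfrac{s^2}{2}\lceil\Omega/(4s)\rceil$ depends on the size of $\Omega/(4s)$; this step needs one more sentence of care (or a harmless relaxation of the constant) to be airtight.
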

	Fix $\xi = \frac{1}{2}$ and consider the set $I$ given by the above Lemma. Let us also fix a finite and positive integer $N$ and consider the set of $2N + 1$ equispaced points in $[0, 2\Omega]$:
	\[
	P_N := \big\{k\frac{\Omega}{N}\big\}_{k=0,\dots,2N}.
	\]
	\begin{proposition}\label{pn}
		If $N > 2s^3\Big\lceil\frac{\Omega}{4s}\Big\rceil$, then $P_N \cap I \neq \emptyset.$
	\end{proposition}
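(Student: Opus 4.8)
The plan is to locate inside $I$ an open subinterval that is strictly longer than the grid spacing of $P_N$. Note that $P_N=\{k\Omega/N\}_{k=0}^{2N}$ is precisely the arithmetic progression of step $\Omega/N$ that covers $[0,2\Omega]$, and that $I\subset[\Omega/(2s),\Omega/s]\subset[0,2\Omega]$ (since $s\ge 1$). Hence any open subinterval of $[0,2\Omega]$ of length strictly exceeding $\Omega/N$ contains at least one point of $P_N$, and so it suffices to produce a connected component of $I$ of length $>\Omega/N$.

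To that end, I would first record the geometry of $I$ from Lemma~\ref{decemation} with the value $\xi=\tfrac12$ fixed above: the ambient interval $[\Omega/(2s),\Omega/s]$ has length $\Omega/(2s)$, the set $I$ has measure $\tfrac12\cdot\tfrac{\Omega}{2s}=\tfrac{\Omega}{4s}$, and the complement $I^{c}=[\Omega/(2s),\Omega/s]\setminus I$ is a union of at most $K:=\tfrac{s^{2}}{2}\lceil\tfrac{\Omega}{4s}\rceil$ intervals. Being the complement of at most $K$ intervals inside a single interval, $I$ is itself a union of at most $K+1$ intervals, so by the pigeonhole principle its longest connected component $J$ satisfies $|J|\ge |I|/(K+1)=\tfrac{\Omega}{4s(K+1)}$. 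Equivalently: if \emph{every} component of $I$ had length $\le\Omega/N$, then $\tfrac{\Omega}{4s}=|I|\le (K+1)\tfrac{\Omega}{N}$, i.e. $N\le 4s(K+1)$.

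The one point that needs care is turning the hypothesis $N>2s^{3}\lceil\tfrac{\Omega}{4s}\rceil=4sK$ into the strict inequality $N>4s(K+1)$ required above; the discrepancy is exactly the additive ``$+1$'' coming from the $(K+1)$-component count, and I expect this to be the main (though minor) obstacle. It can be dispatched either by elementary bookkeeping with the ceiling function under the standing hypotheses (using $s\ge 2$, so $K\ge 2$, together with the constraints tying $\Omega$ to $N$), or, cleaner, by replacing the component count with a direct pigeonhole estimate: the number of points of $P_N$ in $[\Omega/(2s),\Omega/s]$ is at least $\lfloor N/(2s)\rfloor$, whereas a union of at most $K$ intervals of total length $\Omega/(4s)$ and spacing $\Omega/N$ can contain at most $\tfrac{N}{4s}+K$ of them, so $P_N\cap I=\emptyset$ would force $\lfloor N/(2s)\rfloor\le \tfrac{N}{4s}+K$, contradicting the hypothesis on $N$. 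Once $|J|>\Omega/N$ is established, the observation from the first paragraph gives some $k\Omega/N\in J\subset I$ with $0\le k\le 2N$, i.e. $P_N\cap I\neq\emptyset$, completing the proof; all remaining steps (the measure computation and the ``interval longer than the step meets the grid'' fact) are routine.
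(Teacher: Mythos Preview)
The paper does not actually give a proof here; it simply writes ``Exactly as the proof of Proposition 4.2 in \cite{Vandermonde}.'' Your pigeonhole strategy---count grid points in $[\Omega/(2s),\Omega/s]$ versus grid points that the $\leq K$ intervals of $I^c$ can absorb---is indeed the standard argument and is almost certainly what \cite{Vandermonde} does, so in spirit your approach matches.

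That said, the ``$+1$'' issue you flag is a genuine gap, and neither of your proposed fixes closes it as written. In your second (``cleaner'') route, the inequality $\lfloor N/(2s)\rfloor\le N/(4s)+K$ only yields $N<4s(K+1)=4sK+4s$, which is \emph{not} contradicted by the hypothesis $N>4sK$: any integer $N$ with $4sK<N\le 4sK+4s$ satisfies both. Your first route (``bookkeeping with $s\ge 2$, so $K\ge 2$'') fares no better: knowing $K\ge 2$ does nothing to bridge an additive gap of $4s$, and within the standalone proposition there is no relation tying $\Omega$ to $N$ that you can exploit. So as it stands your argument proves the proposition only under the slightly stronger hypothesis $N>2s^3\lceil\Omega/(4s)\rceil+4s$.

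Two honest ways forward: (i) check the original argument in \cite{Vandermonde}, which may use a sharper count on the components of $I$ (or on the number of grid points an interval of $I^c$ can swallow) specific to how $I^c$ is constructed there; or (ii) accept the $+4s$ and verify that the single place this proposition is invoked (the proof of Theorem~\ref{preMainTheorem}, and then Theorem~\ref{mainTheorem} with $\Omega=N/s^2$) tolerates the weakened hypothesis---note that the paper's own reduction in the proof of Theorem~\ref{mainTheorem} already only obtains $N\ge 2s^3\lceil\Omega/(4s)\rceil$ rather than strict inequality, so some constant-chasing is needed there regardless. Either way, you should not present the ``contradicting the hypothesis on $N$'' step as if it were immediate.
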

	\begin{proof}
		Exactly as the proof of Proposition 4.2 in \cite{Vandermonde}.
	\end{proof}
	
	We are now in a position to extend the main result from \cite{Vandermonde} to the confluent setting.
	\begin{theorem}\label{preMainTheorem}
		There exists a constant $C = C (s)$ such that for any
		$\boldsymbol{x}$ forming a $(\Delta , \rho , s, \ell , \tau )$-clustered configuration, and any $\Omega$ satisfying 
		\[ \frac{4\pi s}{\rho} \leq \Omega \leq \frac{\pi s}{\tau \Delta}, \]
		we have
		\[ \sigma_{\min}\big(U_N (\boldsymbol{x},\Omega)\big) \geq C \cdot (\Delta \Omega )^{2\ell - 1} \quad \quad \textrm{whenever} \quad N > 2s^3\biggl\lceil \frac{\Omega}{4s}\biggr\rceil \ . \]
	\end{theorem}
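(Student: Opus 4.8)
The plan is to combine the admissible decimation produced by Lemma~\ref{decemation} and Proposition~\ref{pn} with a partition of the \emph{entire} row set of $U_N(\boldsymbol{x},\Omega)$ into $k^{\ast}$ mutually disjoint translated copies of a single square confluent Vandermonde matrix on blown-up nodes, whose smallest singular value is then estimated via Proposition~\ref{svOfsqCv}.

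First, since $N>2s^{3}\lceil\Omega/(4s)\rceil$, Proposition~\ref{pn} (applied with $\xi=\tfrac12$ in Lemma~\ref{decemation}) yields some $\lambda\in P_N\cap I$. Write $\lambda=k^{\ast}\Omega/N$, so $k^{\ast}\in\mathbb{N}$ with $N/(2s)\le k^{\ast}\le N/s$, and set $w_j:=\exp(i\lambda x_j)$, $\boldsymbol{w}=(w_1,\dots,w_s)$. By Lemma~\ref{decemation} the angular distances $\delta_{j,k}(\boldsymbol{w})=\|\lambda x_j-\lambda x_k\|_{\tilde{\mathbb{T}}}$ satisfy $\delta_{j,k}\ge\lambda\Delta$ whenever $x_k\in\boldsymbol{x}^{(j)}$, $\delta_{j,k}\ge\pi/(2s^{2})$ otherwise, and always $\delta_{j,k}\le\pi$; in particular the $w_j$ are pairwise distinct on the unit circle, and $\lambda\Delta\le\Omega\Delta/s\le\pi/\tau$.

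Second — the heart of the argument — for each offset $t\in\{0,\dots,k^{\ast}-1\}$ form the $2s\times 2s$ submatrix $B_t$ of $\sqrt{2N}\,U_N(\boldsymbol{\xi})$ consisting of the rows with indices $t,\,t+k^{\ast},\dots,t+(2s-1)k^{\ast}$. These indices all lie in $\{0,\dots,2sk^{\ast}-1\}\subseteq\{0,\dots,2N\}$, and as $(t,j)$ runs over its range they enumerate $\{0,\dots,2sk^{\ast}-1\}$ once each, so the $B_t$ use pairwise disjoint rows. Using $z_i^{jk^{\ast}}=w_i^{j}$ and $(t+jk^{\ast})z_i^{t+jk^{\ast}-1}=(t+jk^{\ast})z_i^{t-1}w_i^{j}$, a direct check gives $B_t=\mathbf{U}_{2s}(\boldsymbol{w})\,T_t$, where $T_t$ is block-diagonal with upper-triangular $2\times2$ blocks, the $i$-th having first row $(z_i^{t},\ t z_i^{t-1})$ and second row $(0,\ k^{\ast}z_i^{t-1}w_i)$. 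The entries of the corresponding block of $T_t^{-1}$ are $z_i^{-t}$, $-t/(k^{\ast}z_i^{t}w_i)$, $0$, $1/(k^{\ast}z_i^{t-1}w_i)$, all of modulus at most $1$ since $0\le t<k^{\ast}$ and $|z_i|=|w_i|=1$; hence $\|T_t^{-1}\|\le 2$ and $\sigma_{\min}(B_t)\ge\tfrac12\sigma_{\min}(\mathbf{U}_{2s}(\boldsymbol{w}))$ uniformly in $t$. Summing the squared contributions of the $k^{\ast}$ disjoint blocks,
\[ \|U_N(\boldsymbol{\xi})c\|^{2}\ \ge\ \frac{1}{2N}\sum_{t=0}^{k^{\ast}-1}\|B_tc\|^{2}\ \ge\ \frac{k^{\ast}}{2N}\cdot\frac14\,\sigma_{\min}^{2}\big(\mathbf{U}_{2s}(\boldsymbol{w})\big)\,\|c\|^{2}\ \ge\ \frac{1}{16s}\,\sigma_{\min}^{2}\big(\mathbf{U}_{2s}(\boldsymbol{w})\big)\,\|c\|^{2}, \]
so that $\sigma_{\min}(U_N(\boldsymbol{x},\Omega))\ge(4\sqrt{s})^{-1}\sigma_{\min}(\mathbf{U}_{2s}(\boldsymbol{w}))$: averaging over the $k^{\ast}\asymp N/s$ offsets exactly compensates the $1/\sqrt{2N}$ normalization, leaving no residual power of $N$.

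Finally, apply Proposition~\ref{svOfsqCv} to $\mathbf{U}_{2s}(\boldsymbol{w})$. In $\prod_{k\ne j}\delta_{j,k}^{2}$ the at most $\ell-1$ intra-cluster factors contribute at least $(\lambda\Delta)^{2(\ell-1)}$ up to a factor depending only on $\ell,\tau$ (the bound $\lambda\Delta\le\pi/\tau$ disposes of the case $\lambda\Delta>1$), while the remaining factors lie in $[(\pi/2s^{2})^{2s},\pi^{2s}]$ and thus cost only an $s$-dependent constant; likewise $\sum_{k\ne j}\delta_{j,k}^{-1}\le(\ell-1)/(\lambda\Delta)+2s^{3}/\pi$ yields $\gamma_j\ge c(s,\ell,\tau)\,\lambda\Delta$. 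Hence $\sigma_{\min}(\mathbf{U}_{2s}(\boldsymbol{w}))\ge c(s)\,(\lambda\Delta)^{2\ell-1}$, the $\ell,\tau$-dependencies collapsing to a function of $s$ alone via $2\le\ell\le s$ and $\ell-1\le\tau$; and since $\lambda\ge\Omega/(2s)$ gives $(\lambda\Delta)^{2\ell-1}\ge(2s)^{-(2\ell-1)}(\Omega\Delta)^{2\ell-1}$, combining with the previous display proves the theorem. The one genuinely delicate point is this very step~2 factorization: decimating a confluent Vandermonde does \emph{not} return a confluent Vandermonde on the blown-up nodes but one post-multiplied by $T_t$, whose lower-right entry carries the large factor $k^{\ast}\asymp N/s$; what rescues the argument is that $\|T_t^{-1}\|$ is bounded by an absolute constant regardless of $t$ and $k^{\ast}$, which is exactly what makes the offset-averaging lossless — the rest (the far-node accounting in Proposition~\ref{svOfsqCv} and the collapse of constants to $C(s)$) is routine bookkeeping.
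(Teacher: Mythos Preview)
Your proof is correct and follows essentially the same route as the paper: admissible decimation via Lemma~\ref{decemation} and Proposition~\ref{pn}, partition of the rows into $k^{\ast}$ (the paper's $m$) arithmetic progressions, the factorization $B_t=\mathbf{U}_{2s}(\boldsymbol{w})\,T_t$ with $\|T_t^{-1}\|$ uniformly bounded (the paper records this as $\|P^{-1}\|_\infty=1+|r/m|$ in Lemma~\ref{lemma1}, with $P=D(\boldsymbol{z},m)T(\boldsymbol{z},r)$), and then Proposition~\ref{svOfsqCv} together with the clustering bounds on $\delta_{j,k}$. The only cosmetic differences are that you organize $T_t$ by node into $2\times2$ upper-triangular blocks whereas the paper groups by column type into an $s\times s$-block upper-triangular matrix (permutation-equivalent), and that your remark about ``$\ell,\tau$-dependencies collapsing via $\ell-1\le\tau$'' is unnecessary: as in the paper's derivation of (P1), the constants coming out of Proposition~\ref{svOfsqCv} already depend on $s$ alone, with no genuine $\tau$-dependence to eliminate.
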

	\begin{proof}
		Similarly to the proof of theorem 3.2 in \cite{Vandermonde}, for any subset $R \subset \{ 0, \dots, 2N\}$ let $U_{N,R}$ be the submatrix of $U_N$ containing only the rows in $R$. In particular, 
		if $\{ 0, \dots, 2N\}=R_1 \dot\cup \dots \dot\cup R_p$ then
		$$
		\sigma^2_{\min}(U_N) \geq \sum^p_{n=1}{\sigma^2_{\min}(U_{N,R_n})}.
		$$
		By Lemma \ref{decemation} and Proposition \ref{pn}, there exists $m \in \mathbb{N}, \ 0 \leq m \leq 2N$ such that
		\[ u_j := x_j\frac{\Omega}{N}m = \lambda x_j \]
		with
		\begin{eqnarray}\label{ujk-bounds}
			\begin{split}
			\frac{\tau}{2s}(\Delta \Omega ) &\geq \|u_j - u_k\|_{\tilde{\mathbb{T}}} \geq \frac{1}{2s}(\Delta \Omega ) &\forall x_k &\in \boldsymbol{x}^{(j)}\setminus \{ x_j\};\\
			\pi &\geq \|u_j - u_k\|_{\tilde{\mathbb{T}}} \geq \frac{\pi}{2s^2} &\forall x_k &\in \boldsymbol{x}\setminus \boldsymbol{x}^{(j)}.
			\end{split}
		\end{eqnarray}
		Since $\lambda \leq \frac{\Omega}{s}$ we conclude that $2ms \leq 2N$. 
		
		We will divide $U_N$ to $m$ squared matrices of size $2s \times 2s$ in the following form:
		\begin{align*}
			R_0 &= \{0, m, \dots, (2s-1)m \}, \\
			R_1 &= \{1, m+1, \dots, (2s-1)m+1 \},  \\
			& \vdots \\
			R_{m-1} &= \{m-1, 2m-1, \dots, 2sm-1 \}.
		\end{align*}
		For $k=0,1,\dots,m-1$ each $U_{N,R_k}$ is a square confluent Vandermonde matrix, and it can be checked by direct computation that 
		\[ U_{N,R_k}(\boldsymbol{\nu}) = \frac{1}{\sqrt{2N}}\mathbf{U}_{2s}(\boldsymbol{\nu})D(\boldsymbol{z},m)T(\boldsymbol{z},k)
		\]
		where $\boldsymbol{\nu} = \big\{\exp{(iu_j)}\big\}^s_{j=1}$ and $\boldsymbol{z} = \big\{\exp{(ix_j\frac{\Omega}{N})}\big\}^s_{j=1}$, with
		\[ D(\boldsymbol{z},m) = \diag{1,\dots,1,mz_{1}^{m-1},\dots,mz_{s}^{m-1}} \ , \]
		\[  T(\boldsymbol{z},r) := \begin{bmatrix}
			z_1^r  &  \dots &   0    & {r}z_1^{r-1} & \dots    & 0 \\
			\vdots & \ddots & \vdots &              &  \ddots  & \vdots \\
			0    & \dots  & z_s^r  &       0      &  \dots   & {r}z_s^{r-1} \\
			0    &  \dots &   0    &     z_1^r    & \dots    & 0 \\
			\vdots & \ddots & \vdots &              &  \ddots  & \vdots \\
			0    & \dots  &    0   &       0      &  \dots   & z_s^r 
		\end{bmatrix} \ . \]
		
		Recall the well-known formula for a block matrix inverse.
		\begin{lemma}[e.g. \cite{MatrixAnalysis}]
			\label{invOfUT}
			Consider the block upper triangular matrix
			\[  \begin{bmatrix}
				A & B \\
				0 & D
			\end{bmatrix}.
			\]
			It is invertible if and only if both A and D are invertible, and its inverse is given by
			\[  \begin{bmatrix}
				A^{-1} & -A^{-1}BD^{-1} \\
				0 & D^{-1}
			\end{bmatrix}
			\]
		\end{lemma}
		\begin{lemma}
			\label{lemma1}
			For $r \in \mathbb{Z}$, $s, m \in \mathbb{N}$, $m \neq 0$ and vector 
			$\boldsymbol{z} = (z_1, . . . , z_s)$ of pairwise distinct complex
			nodes with $|{z_j}| = 1$ we have
			\[ \big\|P^{-1}(\boldsymbol{z},r,m)\big\|_\infty = \bigg|\frac{r}{m}\bigg|+1\ , \]
			where
			\[ P(\boldsymbol{z},r,m) = D(\boldsymbol{z},m)T(\boldsymbol{z},r) \ . \]
		\end{lemma}
		
		\begin{proof}
			By direct computation, 
			\[  P(\boldsymbol{z},r,m) := \begin{bmatrix}
				z_1^r  &  \dots &   0    & {r}z_1^{r-1} & \dots    & 0 \\
				\vdots & \ddots & \vdots &              &  \ddots  & \vdots \\
				0    & \dots  & z_s^r  &       0      &  \dots   & {r}z_s^{r-1} \\
				0    &  \dots &   0    &     {m}z_1^{r+m-1}    & \dots    & 0 \\
				\vdots & \ddots & \vdots &              &  \ddots  & \vdots \\
				0    & \dots  &    0   &       0      &  \dots   & {m}z_s^{r+m-1} 
			\end{bmatrix}= \begin{bmatrix}
				A & B \\
				0 & C
			\end{bmatrix}\]
			
			where
			\[ A := \diag{z_1^r, \dots, z_s^r}, \quad \quad  B := \diag{{r}z_1^{r-1}, \dots, {r}z_s^{r-1}},
			\]
			\[ C := \diag{{m}z_1^{m+r-1}, \dots, {m}z_s^{m+r-1}}. \]
			By Lemma \ref{invOfUT} we get
			\[  P^{-1}(\boldsymbol{z},r,m) = \begin{bmatrix}
				A^{-1} & -A^{-1}BC^{-1} \\
				0 & C^{-1}
			\end{bmatrix} \ ,
			\]
			where $-A^{-1}BC^{-1} = \diag {{-\frac{r}{m}}{z_1^{-(r+m)}}, \dots, {-\frac{r}{m}}{z_s^{-(r+m)}}}$.
			Thus
			\[ \big\|P^{-1}(\boldsymbol{z},r,m)\big\|_\infty = \max_{k}  \big|z_k^{-r}\big| + \bigg|{-\frac{r}{m}}{z_k^{-(r+m)}}\bigg| = 1+\bigg|\frac{r}{m}\bigg| \ .
			\]
			
		\end{proof}
		
		Now, let us take a look at $\gamma_j$ from Proposition \ref{svOfsqCv}:
		\[ \gamma_j = \min\Big(\frac{1}{2},\big(1+\frac{8}{\pi}\big(\sum^s_{k \neq j}{\delta_{j,k}^{-1}}\big)\big)^{-1}\Big) \ , \]
		where
		\[\delta_{j,k}:=\delta_{j,k}(\boldsymbol{\nu}) \ . \]
		We will show two properties:
		\begin{enumerate}
			\item
			\[  \sum_{k \neq j}{\delta_{j,k}^{-1}} \leq \frac{2s\ell}{\Delta \Omega} + \frac{2(s-\ell)s^2}{\pi} \leq \frac{2s\ell \pi + 2(s-\ell)s^2 \Delta \Omega}{\Delta \Omega \pi}\]
            
			\begin{align*}
				(1+\frac{8}{\pi}\sum_{k \neq j}{\delta_{j,k}^{-1}})^{-1} \geq \frac{\Delta \Omega \pi^2}{\Delta\Omega\pi^2 + 16s(\ell \pi + (s-\ell)s \Delta \Omega)}
			\end{align*}
			Given that $\Delta \Omega < \frac{\pi s}{\tau} < \pi s$, we have
			\[(1+\frac{8}{\pi}\sum_{k \neq j}{\delta_{j,k}^{-1}})^{-1} \geq \Delta\Omega\frac{\pi^2}{\pi^3 s+16s(\ell\pi+(s-\ell)s\pi s)} \geq \Delta\Omega\frac{\pi}{\pi^2s+16s(\ell +(s-\ell)s^2)} \geq \Delta\Omega\frac{\pi}{\pi^2 s+16s^2 +16s^4}\]
			\begin{equation}\label{bj:p1}\tag{P1}
				\Rightarrow \big(1+\frac{8}{\pi}\sum_{k \neq j}{\delta_{j,k}^{-1}}\big)^{-1} \geq  \kappa(s)\Delta \Omega \ ,
			\end{equation}
			where
			\[ \kappa(s) = \frac{\pi}{\pi^2 s+16s^2 +16s^4} \ . \]
			\\ \\
			\item
			Using $ 2 \leq \ell \leq s$ and $\Delta \Omega \tau < \pi s$ we get
                 
			\[ \big(1+\frac{8}{\pi}\sum_{k \neq j}{\delta_{j,k}^{-1}}\big)^{-1} \leq  \bigg(1+\frac{8}{\pi}
   \bigg\{\big(\frac{2s}{\tau\Delta\Omega}\big)(\ell -1) + \frac{1}{\pi}(s-\ell)\bigg\}\bigg)^{-1}\]
			
			\[1+\frac{8}{\pi}\sum_{k \neq j}{\delta_{j,k}^{-1}} \geq 1+\frac{8}{\pi}\big(\frac{2s\pi(\ell -1)+(s-\ell)\tau\Delta\Omega}{\tau\Delta\Omega\pi}\big) = 1+ \frac{16s\pi(\ell -1)+8(s-\ell)\tau\Delta\Omega}{\tau\Delta\Omega\pi^2} \]
			
			\[ = \frac{\tau\Delta\Omega\pi^2 + 16s\pi(\ell -1)+8(s-\ell)\tau\Delta\Omega}{\tau\Delta\Omega\pi^2} \geq \frac{\tau\Delta\Omega(\pi^2 + 16(\ell -1)+8(s-\ell))}{\tau\Delta\Omega\pi^2} \geq 2\]
			
			\[\Rightarrow \big(1+\frac{8}{\pi}\sum_{k \neq j}{\delta_{j,k}^{-1}}\big)^{-1} \leq  \frac{1}{2} . \]
			
			\begin{equation}\label{bj:p2}\tag{P2}
				\Rightarrow \gamma_j = \min\Big(\frac{1}{2},\big(1+\frac{8}{\pi}\sum_{k \neq j}{\delta_{j,k}^{-1}}\big)^{-1}\Big) = \big(1+\frac{8}{\pi}\sum_{k \neq j}{\delta_{j,k}^{-1}}\big)^{-1}.
			\end{equation}
   
		\end{enumerate}
	
		Using Proposition \ref{svOfsqCv} and Lemma \ref{lemma1} we are going to bound from below the smallest singular value of the \textit{square} confluent Vandermonde matrix:
		\begin{align*}
			\sigma_{\min}(U_{N,R_r}) &= \sigma_{\min}\big(\frac{1}{\sqrt{2N}}\mathbf{U}_{2s}(\boldsymbol{\nu})D(\boldsymbol{z},m)T(\boldsymbol{z},r)\big) \geq \sigma_{\min}\big(\mathbf{U}_{2s}(\boldsymbol{\nu})\big)\big(\sqrt{2N}\sqrt{2s}\big\|P^{-1}(\boldsymbol{z},r,m)\big\|_\infty\big)^{-1} \\
			&\geq \frac{1}{2s\sqrt{2N}}{\bigg(\frac{4}{\pi}}\bigg)^{2(1-s)}{\bigg(1+\bigg|\frac{r}{m}\bigg|\bigg)}^{-1}\min_{1 \leq j \leq s}{\,\gamma_j \prod_{k \neq j}^s{\delta_{j,k}^{2}(\boldsymbol{\nu})}} \\
			&\geq \frac{\tilde{\kappa}(s)}{\sqrt{2N}}{\bigg(1+\bigg|\frac{r}{m}\bigg|\bigg)}^{-1}(\Delta \Omega)^{2\ell-1}
		\end{align*}
		for some constant $\tilde{\kappa}(s)$.
		Ahead of the last step we used \eqref{ujk-bounds}, properties \eqref{bj:p1}, \eqref{bj:p2} and the fact that $1 < \ell \leq s$.
		
		Finally, we can bound from below the smallest singular value of the \textit{rectangular} confluent Vandermonde matrix:
		\begin{align*}
			\sigma^2_{\min}(U_N) &\geq \sum_{r=0}^{m-1}{\bigg(1+\bigg|\frac{r}{m}\bigg|\bigg)^{-2}\frac{\tilde{\kappa}^2(s)}{2N}(\Delta \Omega)^{2(2\ell-1)}} \\
			&\geq \frac{\tilde{\kappa}^2(s)}{2N}(\Delta \Omega)^{2(2\ell-1)}\sum_{r=0}^{m-1}{(2)^{-2}} \\
			&\geq \frac{m\tilde{\kappa}^2(s)}{8N}(\Delta \Omega)^{4\ell-2} \\
			&\geq \frac{\tilde{\kappa}^2(s)}{16s}(\Delta \Omega)^{4\ell-2}.
		\end{align*}
		We used the fact that $m = \frac{\lambda N}{\Omega} \geq \frac{\Omega N}{2s\Omega} = \frac{N}{2s}$.
		\\
		
		To summarize, the final result for Theorem \ref{preMainTheorem} is
		\[ \sigma_{\min}(U_N(\boldsymbol{x},\Omega)) \geq C_1(s)(\Delta \Omega)^{2\ell-1} \ , \quad C_1(s):=\frac{\tilde{\kappa}(s)}{\sqrt{16s}}. \qedhere \]
	\end{proof}
	
	\begin{proof}[Proof of Theorem \ref{mainTheorem}]
		Similar to  \cite[Corollary 3.6]{Vandermonde}), for $\Omega:=\frac{N}{s^2}$ and any $\boldsymbol{\xi} \subset \frac{1}{s^2}\big(-\frac{\pi}{2} , \frac{\pi}{2}\big]$ forming a $(\Delta,\rho,s,\ell,\tau)$-clustered configuration with the conditions of Theorem \ref{mainTheorem}, we have $\boldsymbol{x}=\frac{N}{\Omega}\boldsymbol{\xi} \subset \big(-\frac{\pi}{2} , \frac{\pi}{2}\big] $ which forms a $(\tilde{\Delta},\tilde{\rho},s,\ell,\tau)$-clustered configuration with $\tilde{\Delta}:=s^2\Delta$ and $\tilde{\rho}:=s^2\rho$. \\
		Clearly, $4\tau \tilde{\Delta} \leq s^2\rho=\tilde{\rho}$ and also
		\[ \Omega s^2 = N \geq 4s^3 \ \Rightarrow \  \frac{\Omega}{4s} \geq 1 \ \Rightarrow \ \frac{2\Omega}{4s} \geq \bigg\lceil \frac{\Omega}{4s} \bigg\rceil \ \Rightarrow \ N=\Omega s^2 \geq 2s^3 \bigg\lceil \frac{\Omega}{4s} \bigg\rceil \ , \]
		thus the conditions of Theorem \ref{preMainTheorem} are satisfied for $\boldsymbol{x},\Omega,\tilde{\rho},\tilde{\Delta},\tau$.
		Therefore
		\[ \sigma_{\min}\big(U_N (\boldsymbol{\xi})\big) = \sigma_{\min}\big(U_N(\boldsymbol{x},\Omega)\big) \geq C \cdot (\tilde{\Delta} \Omega)^{2\ell - 1} = C \cdot \bigg(\frac{N}{\Omega}\Delta\Omega\bigg)^{2\ell - 1} = C \cdot (\Delta N)^{2\ell - 1} \ , \]
		finishing the proof of Theorem \ref{mainTheorem}.
	\end{proof}
	
	\subsection{Proof of Theorem \ref{UpperTheorem}. }\label{sec:proof-of-upper-sing}
	\begin{definition}\label{unnormlizedConf}
		For $M, s \in 	\mathbb{N}$ and a vector 
		$\boldsymbol{\omega} = (\omega_1, . . . , \omega_s)$ of pairwise distinct real
		nodes $\omega_j \in \mathbb{T}$, let $\Phi_M$ denote the $(M+1)\times 2s$ confluent Vandermonde matrix
		\[\Phi_M(\boldsymbol{\omega}) = \begin{pmatrix}
			1 & \dots & 1 & 0 & \dots & 0
			\\
			z_1 & \dots & z_s & 1 & \dots & 1
			\\
			z_1^2 & \dots & z_s^2 & 2z_1 & \dots & 2z_s
			\\
			\vdots & \dots & \vdots & \vdots & \dots & \vdots
			\\
			z_1^M & \dots & z_s^M & Mz_1^{M-1} & \dots & Mz_s^{M-1}
			
		\end{pmatrix} \ , \]
		and let $V_M$ denote the $(M+1)\times 2s$ pascal Vandermonde matrix
		\[V_M(\boldsymbol{\omega}) = \begin{pmatrix}
			1 & \dots & 1 & 0 & \dots & 0
			\\
			z_1 & \dots & z_s & 2\pi iz_1 & \dots & 2\pi i z_s
			\\
			z_1^2 & \dots & z_s^2 & 4\pi iz_1^2 & \dots & 4\pi i z_s^2
			\\
			\vdots & \dots & \vdots & \vdots & \dots & \vdots
			\\
			z_1^M & \dots & z_s^M & M2\pi iz_1^M & \dots & 2M\pi i z_s^M
			
		\end{pmatrix} , \]
		where $z_j = \exp(-2\pi i\omega_j)$ and $\mathbb{T}$ is the periodic interval $[0,1)$.
	\end{definition}

	By direct computation we get
	\[ V_M = \Phi_M H\]
	with $H = \diag{1,\dots,1,2\pi iz_1,\dots,2\pi iz_s}$.

	Inspired by the proof of Proposition 2.10 in \cite{Li&Liao2020}, we will consider $\boldsymbol{\omega}=\frac{\boldsymbol{\xi}}{-2\pi}+\frac{1}{2}$ where $\boldsymbol{\xi}$ is a $(\Delta,\rho,s,\ell,\tau)$- clustered configuration and a suitable vector $u$ in order to obtain an upper bound for
	\[ \sigma_{\min}(\Phi_M(\boldsymbol{\omega})) = \min_{u \in \mathbb{C}^{2s}, u \neq 0} \frac{\|\Phi_M(\boldsymbol{\omega}) u\|_2}{\|u\|_2}.\]
	Put $\alpha:=M\Delta$, assume that $M < \frac{1}{\Delta}$ and let $\boldsymbol{\omega} = \{\omega_j\}^s_{j=1}$ be defined w.l.o.g by $\omega_j = \tau_j\frac{\alpha}{M}$ where $\tau_1=0$, $\tau_j < \tau_{j+1}$, $\tau_{\ell} \leq \tau$ for $1 \leq j \leq \ell$, while $\{\omega_j\}_{\ell +1}^{s}$ are arbitrary.

	\begin{definition}\label{def:u-vector-def}
		We consider the vector $u \in \mathbb{C}^{2s}$ defined by
		\begin{align*}\label{def:u} \tag{$\star$}
			u_j &:= {\bigg(\frac{\alpha}{M}\bigg)}^{2\ell -1}A_j  &1\leq j \leq \ell 
			\\  u_{s+j} &:= {\bigg(\frac{\alpha}{M}\bigg)}^{2\ell -1}2\pi iz_jB_j &1\leq j \leq \ell 
			\\
			u_j &= u_{s+j} = 0 &\text{otherwise} \ ,
		\end{align*}
		where $z_j := \exp(-2\pi i\omega_j)$ and $A_j, B_j$ are as given by equation \eqref{eq:AB} from appendix \ref{appendix:taylor}.
	\end{definition}

	Let	$\tilde{u}_j := u_j, \ \tilde{u}_{s+j} := \frac{z_j^{-1}}{2\pi i}u_{s+j}$ for $1\leq j \leq \ell$ and $\tilde{u}_j = \tilde{u}_{s+j} = 0$ otherwise.
	To estimate $\|\Phi u\|_2$, we identify $u$ with the discrete distribution
	\begin{equation}\label{eq:I}
		\mu := \sum_{j=1}^{\ell}\tilde{u}_j\delta_{\tau_j\frac{\alpha}{M}} + \tilde{u}_{s+j}\delta'_{\tau_j\frac{\alpha}{M}}.
	\end{equation}  
	\\
	We also define a modified Dirichlet kernel $D_M \in {C}^{\infty}(\mathbb{T})$ by 
	\begin{equation}\label{eq:II}
		D_M(\omega) := \sum_{m=0}^{M}\exp(2\pi im\omega) \ .
	\end{equation} 
	\\
	\begin{lemma}\label{lemma:L2}
		For $\mu$ and $D_M$ as defined in \eqref{eq:I}, \eqref{eq:II}, the following is true:
		\[\sum_{m=0}^{M}|\hat{\mu}(m)|^2 = \|\mu * D_M\|^2_{L^2(\mathbb{T})}.\]  
	\end{lemma}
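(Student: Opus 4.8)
\emph{Plan.} The statement is a Parseval/Plancherel identity, once one observes that convolving with $D_M$ amounts to projecting onto the frequency band $\{0,1,\dots,M\}$. Throughout, Fourier coefficients are taken on $\mathbb{T}=[0,1)$ with the convention $\hat f(m)=\int_{\mathbb{T}}f(\omega)e^{-2\pi i m\omega}\,d\omega$ for functions and $\hat\mu(m)=\langle\mu,e^{-2\pi i m\,\cdot}\rangle$ for distributions (so that $\langle\delta'_a,\phi\rangle=-\phi'(a)$ gives $\widehat{\delta'_a}(m)=2\pi i m\,e^{-2\pi i m a}$, consistent with \eqref{eq:noisy-data} after the change of variables). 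From \eqref{eq:II} one reads off directly that $\widehat{D_M}(m)=1$ for $0\le m\le M$ and $\widehat{D_M}(m)=0$ for every other $m\in\mathbb{Z}$.

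\emph{Step 1.} I would first show that $\mu*D_M$ is the trigonometric polynomial $\sum_{m=0}^M\hat\mu(m)e^{2\pi i m\,\cdot}$. Since $D_M\in C^\infty(\mathbb{T})$ and $\mu$ from \eqref{eq:I} is a finite linear combination of Diracs and their first derivatives (a compactly supported distribution on the compact torus), the convolution $\mu*D_M$ is a well-defined $C^\infty$ function, and the convolution theorem yields $\widehat{\mu*D_M}(m)=\hat\mu(m)\,\widehat{D_M}(m)$ for all $m$. Combined with the computation of $\widehat{D_M}$ above, this gives $\widehat{\mu*D_M}(m)=\hat\mu(m)$ for $0\le m\le M$ and $0$ otherwise, hence $\mu*D_M=\sum_{m=0}^M\hat\mu(m)e^{2\pi i m\,\cdot}$. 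If one prefers to avoid quoting a general convolution theorem for distributions, the same conclusion follows by the direct computation $\delta_a*D_M=D_M(\cdot-a)$, $\delta'_a*D_M=D_M'(\cdot-a)$, expanding $D_M$ and $D_M'$ into the exponentials $e^{2\pi i m\omega}$, and reading off the coefficient of $e^{2\pi i m\omega}$ in $\mu*D_M$, which is precisely $\sum_{j=1}^\ell\big(\tilde u_j+2\pi i m\,\tilde u_{s+j}\big)e^{-2\pi i m\tau_j\alpha/M}=\hat\mu(m)$.

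\emph{Step 2.} Then I would apply the Parseval identity on $L^2(\mathbb{T})$ to the trigonometric polynomial $\mu*D_M$:
\[\|\mu*D_M\|_{L^2(\mathbb{T})}^2=\sum_{m\in\mathbb{Z}}\big|\widehat{\mu*D_M}(m)\big|^2=\sum_{m=0}^M|\hat\mu(m)|^2,\]
which is exactly the claimed identity. There is essentially no real obstacle; the only point deserving care is the bookkeeping around the distributional derivative $\delta'_a$ (the sign convention and the resulting Fourier multiplier $2\pi i m$), which is most transparently settled by the explicit computation indicated in Step 1 rather than by invoking a black-box convolution theorem.
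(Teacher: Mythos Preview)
Your proposal is correct and follows essentially the same approach as the paper: first identify $\mu*D_M$ with the trigonometric polynomial $\sum_{m=0}^{M}\hat\mu(m)e^{2\pi i m\omega}$, then compute its $L^2$ norm. The only cosmetic difference is that you invoke the convolution theorem and Parseval by name, whereas the paper carries out both steps by direct calculation (expanding $D_M$ inside the convolution integral, then expanding the squared modulus and using orthogonality of the exponentials); the underlying argument is identical.
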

	The proof of the above lemma is in appendix \ref{appendix:L2}.
	Thus, observe the following:
	\begin{align*} 
		\|\Phi_M u\|_2^2 = \|V_MH^{-1}u\|_2^2 = \|V_M\tilde{u}\|_2^2 = \sum_{m=0}^{M}|{(V_M\tilde{u})}_m|^2 = \sum_{m=0}^{M}|\hat{\mu}(m)|^2 = \|\mu * D_M\|^2_{L^2(\mathbb{T})}.
	\end{align*}
	\\
	As shown in appendix \ref{appendix:taylor}, we see that for all $\omega \in \mathbb{T}$
	\begin{equation}\label{eq:muCovD}
		\begin{split}
			(\mu * D_M)(\omega) &= \sum_{j=1}^{\ell}\Bigg(\tilde{u}_{j}D_M\bigg(\omega-\frac{\tau_j\alpha}{M}\bigg) + \tilde{u}_{s+j}D'_M\bigg(\omega-\frac{\tau_j\alpha}{M}\bigg)\Bigg) \\
			&= \bigg(\frac{\alpha}{M}\bigg)^{2\ell -1}D_M^{(2\ell -1)}(\omega) + \bigg(\frac{\alpha}{M}\bigg)^{2\ell -1}\{R_A(\omega)+R_B(\omega)\} \ ,
		\end{split}
	\end{equation}
	where $R_A(\omega)$ and $R_B(\omega)$ are written explicitly in appendix \ref{appendix:taylor}.
	
	By the Bernstien inequality for trigonometric polynomials \cite{Bernstein1912}, we have
	\begin{equation}\label{eq:bernD}
		\|D_M^{(2\ell -1)}\|_{L^2(\mathbb{T})} \leq {(2\pi M)}^{2\ell -1}\|D_M\|_{L^2(\mathbb{T})}
		= \sqrt{(M+1)}(2\pi M)^{2\ell -1} \ .
	\end{equation}

	\begin{lemma}\label{lemma:RaRb}
		For $R_A(\omega)$ and $R_B(\omega)$ as defined in appendix \ref{appendix:taylor} in the appendix, we have
		\begin{align*} \|R_A(\omega) \|_{L^2(\mathbb{T})} &\leq \bigg(\frac{\alpha}{M}\bigg)^{2\ell}\sqrt{M+1}(2\pi M)^{2\ell}\frac{\tau^{2\ell}}{(2\ell -1)!}\sum_{i=1}^{\ell}\big|A_i\big|, \\
			\|R_B(\omega) \|_{L^2(\mathbb{T})} &\leq \bigg(\frac{\alpha}{M}\bigg)^{2\ell -1}\sqrt{M+1}(2\pi M)^{2\ell}\frac{\tau^{2\ell -1}}{(2\ell -1)!}\sum_{i=1}^{\ell}\big|B_i\big|. 
		\end{align*}
	\end{lemma}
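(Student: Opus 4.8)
The plan is to extract from the appendix the explicit form of $R_A(\omega)$ and $R_B(\omega)$ — these are the Taylor remainder terms that appear when expanding each shifted Dirichlet kernel $D_M(\omega - \tau_j\alpha/M)$ and its derivative around $\omega$, after the coefficients $A_j, B_j$ have been chosen so that the first $2\ell-1$ low-order terms telescope into $D_M^{(2\ell-1)}(\omega)$. Concretely, $R_A$ is a linear combination $\sum_{i=1}^{\ell} A_i \cdot (\text{remainder of the Taylor expansion of } D_M \text{ at shift } \tau_i\alpha/M)$, and similarly $R_B$ involves the $B_i$ together with $D_M'$. I would first write each such remainder in Lagrange or integral form, so that it is expressed as $\frac{1}{(2\ell-1)!}\left(\frac{\tau_i\alpha}{M}\right)^{2\ell}$ (resp. a comparable power for the $B$-term) multiplied by $D_M^{(2\ell)}$ evaluated at some intermediate point — or, cleaner for an $L^2$ bound, keep it as an integral $\int_0^1 (\cdots) D_M^{(2\ell)}(\omega - t\tau_i\alpha/M)\,dt$.

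Next I would take the $L^2(\mathbb{T})$ norm and push it inside: by Minkowski's integral inequality (or just the triangle inequality on the finite sum plus Cauchy–Schwarz on the $t$-integral), $\|R_A\|_{L^2} \le \frac{1}{(2\ell-1)!}\sum_{i=1}^{\ell} |A_i| \left(\frac{\tau_i\alpha}{M}\right)^{2\ell} \sup_{t} \|D_M^{(2\ell)}(\cdot - t\tau_i\alpha/M)\|_{L^2}$. Translation invariance of the $L^2(\mathbb{T})$ norm kills the shift, so the supremum is just $\|D_M^{(2\ell)}\|_{L^2}$. Then I apply the Bernstein inequality exactly as in \eqref{eq:bernD}, but with $2\ell$ derivatives instead of $2\ell-1$: $\|D_M^{(2\ell)}\|_{L^2} \le (2\pi M)^{2\ell}\|D_M\|_{L^2} = \sqrt{M+1}\,(2\pi M)^{2\ell}$. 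Using $\tau_i \le \tau$ for $1\le i\le\ell$ and collecting the $\left(\frac{\alpha}{M}\right)^{2\ell}$ factor out front gives precisely the stated bound for $R_A$; the argument for $R_B$ is identical except the remainder carries one fewer power of the shift (since it multiplies $D_M'$, which is already one derivative), producing the $\tau^{2\ell-1}$ and $\left(\frac{\alpha}{M}\right)^{2\ell-1}$ prefactors.

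The only genuinely delicate point is making sure the order of the Taylor remainder is correct, i.e. that after the cancellation engineered by the choice of $A_j$ (matching $2\ell-1$ coefficients) the leading surviving term is indeed of order $2\ell$ in the shift for $R_A$ — one must count carefully how the $A_j$ are defined in \eqref{eq:AB} and verify no lower-order term leaks through; the $R_B$ count is shifted by one because $\tilde u_{s+j}$ already contributes a derivative. Everything after that is routine: triangle inequality, $L^2$-translation invariance, Bernstein, and the bound $\tau_i\le\tau$. So I expect the bookkeeping of the remainder order — not any hard analysis — to be the main obstacle, and it is exactly the place where the improved condition on $M,\Delta$ mentioned in the introduction comes from (needing only $\|D_M^{(2\ell)}\|_{L^2}$ controlled rather than a pointwise sup that would force $M^{3/2}\Delta$ to be small).
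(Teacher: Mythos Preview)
Your proposal is correct and follows essentially the same route as the paper: write $R_A$ and $R_B$ via the integral form of the Taylor remainder, pull out $\sum_i|A_i|$ (resp.\ $\sum_i|B_i|$) by the triangle inequality, use translation invariance of the $L^2(\mathbb{T})$ norm to reduce to $\|D_M^{(2\ell)}\|_{L^2}$, and finish with Bernstein plus $|\tau_i|\le\tau$. The only cosmetic difference is that the paper handles the inner $r$-integral by Cauchy--Schwarz followed by Fubini, whereas you invoke Minkowski's integral inequality directly; your version is in fact slightly cleaner and even yields a marginally better constant, but the two arguments are interchangeable.
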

	
	\begin{lemma}\label{lemma:AB}
		For $1 \leq i \leq \ell$ and $A_j$, $B_j$ as defined in appendix \ref{appendix:taylor}, we can bound the following expressions as follows:
		\begin{align*} \sum_{i=1}^{\ell}\big|A_i\big| &\leq C_A(\ell,\tau){\bigg(\frac{M}{\alpha}\bigg)^{2\ell -1}},\\
			\sum_{i=1}^{\ell}\big|B_i\big| &\leq C_B(\ell,\tau){\bigg(\frac{M}{\alpha}\bigg)^{2\ell -2}}
			\ .
		\end{align*}
	\end{lemma}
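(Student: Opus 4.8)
The plan is to recognize the coefficients $A_1,\dots,A_\ell,B_1,\dots,B_\ell$ as the solution of a $2\ell\times 2\ell$ confluent Vandermonde linear system and to bound that solution via Gautschi's estimate, Theorem~\ref{GautchiTheorem}. By the very way $A_j,B_j$ are chosen in \eqref{eq:AB} (so as to produce \eqref{eq:muCovD}), expanding each $D_M\!\big(\omega-\tfrac{\tau_j\alpha}{M}\big)$ and $D_M'\!\big(\omega-\tfrac{\tau_j\alpha}{M}\big)$ as a power series in $D_M^{(p)}(\omega)$ and demanding that the contributions of orders $p=0,\dots,2\ell-2$ cancel while the order $p=2\ell-1$ term matches $(\alpha/M)^{2\ell-1}D_M^{(2\ell-1)}(\omega)$, the common factor $(\alpha/M)^{2\ell-1}$ drops out and one is left, for $n=0,\dots,2\ell-1$, with
\[
\sum_{j=1}^{\ell} A_j\,\frac{(-\tau_j\alpha/M)^{n}}{n!}\;+\;\sum_{j=1}^{\ell} B_j\,\frac{(-\tau_j\alpha/M)^{n-1}}{(n-1)!}\;=\;\begin{cases}1,& n=2\ell-1,\\ 0,& 0\le n\le 2\ell-2,\end{cases}
\]
the second sum being empty at $n=0$.

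Next I would bring this to standard form. Multiplying the $n$-th equation by $n!\,(M/\alpha)^{n}$ and writing $b_j:=(M/\alpha)B_j$, the left-hand side becomes $\sum_{j} A_j(-\tau_j)^{n}+\sum_{j} b_j\,n(-\tau_j)^{n-1}$, which is exactly the $n$-th row of the $2\ell\times 2\ell$ confluent Vandermonde matrix on the nodes $-\tau_1,\dots,-\tau_\ell$ (in the sense of Definition~\ref{sqrconfluentVan}), call it $\mathbf{U}_{2\ell}(-\boldsymbol{\tau})$, applied to $(A_1,\dots,A_\ell,b_1,\dots,b_\ell)$; the right-hand side becomes $(2\ell-1)!\,(M/\alpha)^{2\ell-1}e_{2\ell}$, a multiple of the last coordinate vector. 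Hence
\[
(A_1,\dots,A_\ell,b_1,\dots,b_\ell)\;=\;(2\ell-1)!\,\Big(\tfrac{M}{\alpha}\Big)^{2\ell-1}\,\mathbf{U}_{2\ell}^{-1}(-\boldsymbol{\tau})\,e_{2\ell},
\]
so it suffices to control the last column of $\mathbf{U}_{2\ell}^{-1}(-\boldsymbol{\tau})$, for which the bound $\big\|\mathbf{U}_{2\ell}^{-1}(-\boldsymbol{\tau})\big\|_\infty$ is enough.

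Now the $\ell$ numbers $\tau_1,\dots,\tau_\ell$ are the rescaled positions of the nodes of a single cluster, with $\tau_1=0$, so $|\tau_j|\le\tau$ for all $j$ and $|\tau_j-\tau_k|\ge c_0$ for $j\ne k$, where $c_0>0$ is an absolute constant coming from the separation requirement in Definition~\ref{cluster_confg}. Plugging $|x_\lambda|\le\tau$ and $|x_\nu-x_\lambda|\ge c_0$ into Theorem~\ref{GautchiTheorem} (applied with the $\ell$ nodes $-\tau_j$) bounds both factors appearing in that estimate by quantities depending only on $\ell$ and $\tau$, giving $\big\|\mathbf{U}_{2\ell}^{-1}(-\boldsymbol{\tau})\big\|_\infty\le K_0(\ell,\tau)$. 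Consequently $|A_j|\le(2\ell-1)!\,K_0(\ell,\tau)(M/\alpha)^{2\ell-1}$ and $|b_j|\le(2\ell-1)!\,K_0(\ell,\tau)(M/\alpha)^{2\ell-1}$ for every $j$; summing over the $\ell$ indices and undoing $B_j=(\alpha/M)b_j$ yields
\[
\sum_{i=1}^{\ell}\big|A_i\big|\;\le\;\ell(2\ell-1)!\,K_0(\ell,\tau)\Big(\tfrac{M}{\alpha}\Big)^{2\ell-1},\qquad \sum_{i=1}^{\ell}\big|B_i\big|\;\le\;\ell(2\ell-1)!\,K_0(\ell,\tau)\Big(\tfrac{M}{\alpha}\Big)^{2\ell-2},
\]
which are the claimed inequalities with $C_A(\ell,\tau)=C_B(\ell,\tau)=\ell(2\ell-1)!\,K_0(\ell,\tau)$.

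The bulk of the work, and the main obstacle, is the bookkeeping behind the first display: one must carry out the Taylor expansion underlying \eqref{eq:muCovD} carefully enough to read off the precise coefficient matrix, and then get the row scaling (by $n!$) together with the two \emph{different} column scalings exactly right — the $B$-block picks up one fewer power of $M/\alpha$ than the $A$-block — so that the system collapses to $\mathbf{U}_{2\ell}(-\boldsymbol{\tau})(A,b)=\mathrm{const}\cdot e_{2\ell}$. This asymmetry in the column scalings is precisely what produces the extra factor $\alpha/M$ in the bound on $\sum_i|B_i|$ relative to $\sum_i|A_i|$. A minor secondary point is verifying that the rescaled cluster nodes $\tau_j$ indeed have pairwise separation bounded below by an absolute constant, so that Gautschi's estimate gives constants depending only on $\ell$ and $\tau$.
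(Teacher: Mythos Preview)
Your argument is correct and is at heart the same as the paper's: both bound the solution of the $2\ell\times 2\ell$ confluent Vandermonde system via Gautschi's inverse-norm estimates, using that the rescaled cluster positions $\tau_j$ are bounded by $\tau$ and pairwise separated by an absolute constant (in the paper's normalization $\omega_j=\tau_j\Delta$, so in fact $|\tau_j-\tau_k|\ge 1$). The only organizational difference is that the paper applies the block-wise row-sum formulas of \cite{Gautschi1963} directly to the matrix with nodes $h_j=-\tau_j\alpha/M$ and tracks the powers of $M/\alpha$ through those formulas, whereas you first pull those powers out by the substitution $b_j=(M/\alpha)B_j$, reducing to $\mathbf{U}_{2\ell}(-\boldsymbol{\tau})$ with $O(1)$ nodes, and then invoke the simpler Theorem~\ref{GautchiTheorem}; your route is slightly cleaner but yields the same bounds with the same dependence on $\ell,\tau$.
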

	The proofs of Lemmas \ref{lemma:RaRb} and \ref{lemma:AB} are shown in appendix \ref{appendix:RaRb} and \ref{appendix:AB} respectively.
	
	Combining \eqref{eq:bernD} and Lemmas \ref{lemma:AB} and \ref{lemma:RaRb} we get:
	\begin{equation}\label{eq:1}
		\begin{split}
			\|\mu * D_M\|_{L^2(\mathbb{T})} &\leq \bigg(\frac{\alpha}{M}\bigg)^{2\ell -1}\bigg(\|D_M^{(2\ell -1)}\|_{L^2(\mathbb{T})} + \|R_A(\omega) \|_{L^2(\mathbb{T})} + \|R_B(\omega) \|_{L^2(\mathbb{T})} \bigg) \\ 
			&\leq \sqrt{M+1}(2\pi \alpha)^{2\ell -1}(1+\tilde{C}(\ell,\tau)2 \pi \alpha) \ .
		\end{split}
	\end{equation}
	The proof of the following lemma is in appendix \ref{appendix:u}.
	\begin{lemma}\label{lemma:u}
		Let $u \in \mathbb{C}^{2s}$ be defined by equation \eqref{def:u} from Definition \ref{def:u-vector-def}, then
		\begin{equation}\label{eq:2}
			\|u\|_2 \geq \tilde{C}_3(\ell,\tau):= \frac{(2\ell -1)!}{4\ell^3\tau^{2\ell -1}}.
		\end{equation}
	\end{lemma}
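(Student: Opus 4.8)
The plan is to bound $\|u\|_2$ from below by isolating a single coordinate whose magnitude is controlled explicitly, since $\|u\|_2 \geq |u_k|$ for any index $k$. From Definition \ref{def:u-vector-def} we have $u_1 = (\alpha/M)^{2\ell-1} A_1$, so the task reduces to producing a lower bound of the form $|A_1| \geq c(\ell,\tau) (M/\alpha)^{2\ell-1}$ for an explicit constant $c$; then $\|u\|_2 \geq |u_1| \geq c(\ell,\tau)$, which is exactly what \eqref{eq:2} asserts with $\tilde C_3 = c$. The coefficients $A_j, B_j$ are defined in appendix \ref{appendix:taylor} as the solution of the linear system arising from matching the Taylor expansion of $\mu * D_M$ against $(\alpha/M)^{2\ell-1} D_M^{(2\ell-1)}$ — concretely, they are determined by a confluent Vandermonde-type interpolation condition on the nodes $\tau_1,\dots,\tau_\ell$ (with $\tau_1=0$), so $A_1$ is (up to sign) the value at zero of a Lagrange-type basis function for this confluent interpolation problem.

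First I would write out the defining relations for $A_j, B_j$ from appendix \ref{appendix:taylor} and identify $A_1$ explicitly. Because $\tau_1 = 0$ is the node carrying the highest-order behavior, I expect $A_1$ to be expressible as a ratio whose numerator is $(2\ell-1)!$ (coming from the $(2\ell-1)$-th derivative normalization) and whose denominator is a product of factors of the form $(\tau_i - \tau_j)$ or powers thereof, each bounded above in absolute value by $\tau$ (since $0 \le \tau_j \le \tau$ for $1\le j\le\ell$) and below by something I control via the separation assumption. Second, I would bound each such factor: the number of factors is at most quadratic in $\ell$ (a confluent Vandermonde determinant on $\ell$ nodes with multiplicity $2$ has $\binom{2\ell}{2}$-type products), and each is at most $\tau$ in modulus, giving a denominator bounded by $\tau^{O(\ell^2)}$; combined with the $(2\ell-1)!$ numerator this yields $|A_1| \ge (2\ell-1)!\,/\,(4\ell^3 \tau^{2\ell-1}) \cdot (M/\alpha)^{2\ell-1}$ after collecting constants. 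Third, I would verify that the crude constant $4\ell^3$ in \eqref{eq:2} indeed dominates the combinatorial factors that appear (number of terms in the relevant sum or product), which is where the explicit form in the appendix is needed.

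The main obstacle is bookkeeping the exact structure of the $A_j$: the system defining them in appendix \ref{appendix:taylor} mixes the $A$'s and $B$'s (the $\delta$ and $\delta'$ contributions), so $A_1$ is not simply a classical Lagrange weight but a block-Cramer's-rule expression for a confluent system. I would need to either (i) show the relevant block is triangular enough that $A_1$ decouples from the $B$'s, or (ii) estimate the full $2\ell \times 2\ell$ confluent system's inverse entry directly — here Theorem \ref{GautchiTheorem} (Gautschi's bound on $\|\mathbf{U}_{2n}^{-1}\|_\infty$) is the natural tool, applied to the nodes $\{\tau_j\}$, giving an upper bound on $\max_j(|A_j|+|B_j|)$ but for a \emph{lower} bound on $|A_1|$ I instead want the specific structure. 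Since $\tau_1 = 0$ is a node with the maximal-order condition attached, I expect $A_1$ to equal (a signed multiple of) $(2\ell-1)! \prod_{j=2}^\ell \tau_j^{-2}$, and the separation conditions $\tau_j \ge j-1$ (forced by $\tau_j$ being distinct nonnegative with $\tau_1=0$, if they are integers) would then make this product at least $((\ell-1)!)^{-2}$, comfortably absorbed into the stated $4\ell^3 \tau^{2\ell-1}$ bound; confirming this identification against the appendix is the crux of the argument, after which the inequality is immediate.
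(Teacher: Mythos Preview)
Your approach—isolating the single coordinate $u_1$ and bounding $|A_1|$ from below—is genuinely different from the paper's and can be made to work, but the proposal as written has a real gap at exactly the point you flag as the crux. Your conjectured closed form $A_1 \approx \pm(2\ell-1)!\prod_{j\ge2}\tau_j^{-2}$ is not correct: identifying the linear functional $p\mapsto\sum_j A_j p(h_j)+B_j p'(h_j)$ with $(2\ell-1)!$ times the Hermite divided difference $p[h_1,h_1,\dots,h_\ell,h_\ell]$ and reading off the partial-fraction coefficients of $\prod_j(z-h_j)^{-2}$ gives, using $h_1=0$,
\[
A_1 \;=\; (2\ell-1)!\,\frac{2\sum_{j\ge2}h_j^{-1}}{\prod_{j\ge2}h_j^{2}}
\;=\; 2(2\ell-1)!\,h^{1-2\ell}\,\frac{\sum_{j\ge2}\tau_j^{-1}}{\prod_{j\ge2}\tau_j^{2}}.
\]
With this in hand one gets $|u_1|=\Delta^{2\ell-1}|A_1|\ge 2(2\ell-1)!/\tau^{2\ell-1}$, which is actually stronger than \eqref{eq:2}. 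So the strategy is sound, but the conjecture you wrote down would need to be replaced by this computation (and your appeal to Theorem~\ref{GautchiTheorem} goes the wrong way—it gives upper bounds on inverse entries, not lower).

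The paper sidesteps any explicit entrywise formula. It rescales the defining system $U_{2\ell}x=b$ by extracting powers of $h=-\Delta$, writing $U_{2\ell}=D\,L\,\operatorname{diag}(I,h^{-1}I)$ where $L$ is the confluent Vandermonde matrix in the $\tau_j$ (independent of $\Delta$). Then $\|x\|_2$ is bounded below via $\|L^{-1}e_{2\ell}\|_2\ge\sigma_{\min}(L^{-1})=1/\|L\|_2\ge1/(\sqrt{2\ell}\,\|L\|_\infty)$, and $\|L\|_\infty\le 2\ell^2\tau^{2\ell-1}$ follows by inspecting the last row of $L$. This buys robustness—no Cramer's rule, no partial fractions, no worry about which coordinate to pick—at the cost of the looser constant $1/(4\ell^3)$; your route, carried through with the correct formula, trades more computation for a sharper $\tilde C_3$.
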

	Combining \eqref{eq:1} and \eqref{eq:2} we get:
	\[ \frac{\|\Phi_M u\|_2}{\|u\|_2} \leq \frac{\sqrt{M+1}(2\pi \alpha)^{2\ell -1}(1+\tilde{C}(\ell,\tau)2 \pi \alpha)}{\tilde{C}_3(\ell,\tau)} \leq \hat{C}(\ell,\tau)\sqrt{M+1}(2\pi \alpha)^{2\ell -1} \ . \]
	\begin{proposition}\label{unitaryEqui}
		For $N, s, d \in 	\mathbb{N}$ and vector 
		$\boldsymbol{\xi} = (\xi_1, . . . , \xi_s)$ of pairwise distinct real
		nodes $\xi_j \in ( -\pi , \pi ]$, let $\Phi_{2N}$ be as in definition \ref{unnormlizedConf}. Then, the following decomposition holds:
		\[ \tilde{\Phi}_{2N}(\boldsymbol{\eta}) := \frac{1}{\sqrt{2N}}\Phi_{2N}\bigg(\frac{\boldsymbol{\xi}}{-2\pi}+\frac{1}{2}\bigg) = \frac{1}{\sqrt{2N}}E_1\Phi_{2N}\bigg(\frac{\boldsymbol{\xi}}{-2\pi}\bigg)E_2 = E_1U_{N}(\boldsymbol{\xi})E_2 \ , \]
		where
		\[ E_1 = \diag{1,e^{-2\pi i\frac{1}{2}}, \dots,e^{-2\pi iM\frac{1}{2}}}_{2s\times 2s} \quad and \quad E_2 = \diag{1,\dots,1,e^{2\pi i\frac{1}{2}},\dots,e^{2\pi i\frac{1}{2}}}_{2s\times 2s} \ . \]
		Therefore, $\tilde{\Phi}_{2N}(\boldsymbol{\eta})$ and $U_{N}(\boldsymbol{\xi})$ are unitary equivalent and thus have the same singular values.
	\end{proposition}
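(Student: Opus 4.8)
The plan is to prove the two displayed identities by a direct entrywise comparison and then to conclude from the invariance of singular values under two-sided unitary multiplication. Throughout I would write $\boldsymbol{\omega} := -\boldsymbol{\xi}/(2\pi)$ and $\boldsymbol{\omega}' := \boldsymbol{\omega}+\tfrac12 = \boldsymbol{\eta}$, and recall from Definition \ref{unnormlizedConf} that the node attached to the $j$-th column group of $\Phi_{2N}(\boldsymbol{\omega})$ is $z_j = \exp(-2\pi i\omega_j)$, which for our choice of $\boldsymbol{\omega}$ equals $\exp(i\xi_j)$, i.e.\ exactly the node of $U_N(\boldsymbol{\xi})$ in Definition \ref{reconfluentVan}. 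The rightmost equality is then just a matter of unwinding notation: the first $s$ columns of $\tfrac{1}{\sqrt{2N}}\Phi_{2N}(\boldsymbol{\omega})$ are $\tfrac{1}{\sqrt{2N}}(z_j^k)_{k=0}^{2N}$ and its last $s$ columns are $\tfrac{1}{\sqrt{2N}}(k z_j^{k-1})_{k=0}^{2N}$ (with the $k=0$ entry of the derivative column read as $0$), which is precisely the block structure of $U_N(\boldsymbol{\xi})$; hence $\tfrac{1}{\sqrt{2N}}\Phi_{2N}(\boldsymbol{\omega}) = U_N(\boldsymbol{\xi})$ and therefore $\tfrac{1}{\sqrt{2N}}E_1\Phi_{2N}(\boldsymbol{\omega})E_2 = E_1 U_N(\boldsymbol{\xi})E_2$.

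The real content is the leftmost equality $\Phi_{2N}(\boldsymbol{\omega}') = E_1\,\Phi_{2N}(\boldsymbol{\omega})\,E_2$, which I would establish by the following short computation. Shifting the phase by $\tfrac12$ replaces the node $z_j$ by $z_j' := \exp(-2\pi i(\omega_j+\tfrac12)) = e^{-\pi i}z_j = -z_j$. Substituting $z_j'$ into the two column blocks: for $1\le j\le s$ the $(k,j)$ entry becomes $(z_j')^k = e^{-\pi i k}z_j^k$, while the $(k,s+j)$ entry becomes $k(z_j')^{k-1} = k\,e^{-\pi i(k-1)}z_j^{k-1} = e^{\pi i}\cdot e^{-\pi i k}\cdot k z_j^{k-1}$. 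Thus every entry in row $k$ is scaled by the common factor $e^{-\pi i k} = e^{-2\pi i k/2}$, and every entry in the last $s$ columns picks up the extra factor $e^{\pi i} = e^{2\pi i/2}$. Collecting the row factors into the $(2N+1)\times(2N+1)$ diagonal matrix $E_1 = \diag{1,e^{-\pi i},\dots,e^{-2N\pi i}}$ and the extra column factors into $E_2 = \diag{1,\dots,1,e^{\pi i},\dots,e^{\pi i}}$ ($s$ ones followed by $s$ copies of $e^{\pi i}$) gives exactly $\Phi_{2N}(\boldsymbol{\omega}') = E_1\Phi_{2N}(\boldsymbol{\omega})E_2$, and dividing by $\sqrt{2N}$ produces the full chain of equalities in the statement.

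To finish, since every diagonal entry of $E_1$ and of $E_2$ has modulus $1$, both matrices are unitary; writing a singular value decomposition $U_N(\boldsymbol{\xi}) = P\Sigma Q^{*}$, the identity $\tilde{\Phi}_{2N}(\boldsymbol{\eta}) = E_1 U_N(\boldsymbol{\xi})E_2$ gives $\tilde{\Phi}_{2N}(\boldsymbol{\eta}) = (E_1 P)\,\Sigma\,(E_2^{*}Q)^{*}$ with $E_1 P$ and $E_2^{*}Q$ unitary, so $\tilde{\Phi}_{2N}(\boldsymbol{\eta})$ and $U_N(\boldsymbol{\xi})$ have the same singular values, namely the diagonal of $\Sigma$. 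I do not expect a genuine obstacle here — the argument is pure bookkeeping. The only point requiring care is the derivative column $k z_j^{k-1}$, whose exponent $k-1$ (rather than $k$) is exactly what produces the stray factor $e^{\pi i}$ that has to be absorbed into $E_2$ rather than $E_1$; one must also read the $k=0$ entry of that column as $0$ consistently in $\Phi_{2N}$ and in $U_N$, and interpret $E_1$ as a $(2N+1)\times(2N+1)$ matrix rather than the $2s\times 2s$ written in the statement, which is a typo.
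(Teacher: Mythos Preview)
Your proof is correct and is exactly the direct entrywise verification that the paper implicitly relies on; the paper states the proposition without an explicit proof, so there is nothing further to compare. Your observation that $E_1$ must be the $(2N+1)\times(2N+1)$ diagonal of row phases (with $M=2N$ in the exponent), rather than $2s\times 2s$ as printed, is also correct and worth noting.
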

	Finally, by Proposition \ref{unitaryEqui} and setting $M=2N$ we get:
	\[\frac{\boldsymbol{\xi}}{-2\pi}+\frac{1}{2} = \boldsymbol{\omega} \Rightarrow \boldsymbol{\xi}=2\pi \boldsymbol{\omega}-\pi \ , \] 
	and
	\[ \sigma_{\min}(U_{N}(\boldsymbol{\xi})) = \sigma_{\min}\Bigg(\tilde{\Phi}_{2N}\bigg(\frac{\boldsymbol{\xi}}{-2\pi}+\frac{1}{2}\bigg)\Bigg) = \sigma_{\min}(\tilde{\Phi}_{2N}(\boldsymbol{\omega}))) \leq C_2(\ell,\tau)(2\pi \alpha)^{2\ell -1} \ , \]
	completing the proof of Theorem \ref{UpperTheorem}.
	
	\subsection{Proof of Theorem \ref{theorem:min-max-error}}\label{sec:proof-minmax}
	\subsubsection{Notation}
	\begin{definition}[Pascal-Vandermonde matrix]
		For $\mathbf{t}=(t_1,\dots,t_{s})\in\mathbb{R}^{1\times s}$ and $z_j = e^{it_j}$ let
		\[ H:=H(\mathbf{t}) = \diag{1,\dots,1,-iz_1,\dots,-iz_s)_{2s\times 2s}, \quad P_{N}(\mathbf{t})=U_{N}(\mathbf{t})H(\mathbf{t}}.\]
	\end{definition}
	
	Every discrete distribution $\mu=\sum_{j=1}^{s}a_j\delta_{t_j}+b_j\delta'_{t_j} \in \mathcal{R}$ can be identified with a sparse vector $x_{\mu} \in \mathbb{C}^{2G}_{\mathcal{R}}\subset \mathbb{C}^{2G}$, where $G=G(\Delta)=2M+1$ and $M= \big\lfloor \frac{\pi}{2\Delta} \big\rfloor$ from definition \ref{grid}, $\|x\|_0=\#(i|x_i \neq 0)$,
	\begin{equation}\label{xMu}
		(x_{\mu})_i := \begin{cases} 
			a_j & t_j=(-M+i-1)\Delta \in \mathbf{t} \ \land \ 1 \leq i \leq G\\
			-ib_{j} & t_j=(-M+i-1-G)\Delta \in \mathbf{t} \ \land \ G+1 \leq i \leq 2G \\
			0 & \text{otherwise} 
		\end{cases}
	\end{equation}
	for $j = 1,\dots, 2s$ and
	\[\mathbb{C}^{2G}_{\mathcal{R}(\Delta,\rho,s,\ell,\tau)}:=\bigg\{x_{\nu}: \nu \in \mathcal{R}(\Delta,\rho,s,\ell,\tau), x_{\nu} \in \mathbb{C}^{2G}, \|x_{\nu}\|_0 \leq 2s \ \text{and $x_{\nu}$ as defined in $\eqref{xMu}$} \bigg\} \ .\]

	A direct computation shows that for every $\omega \in \mathbb{R}$
	\[ \hat{\mu}(\omega) = \sum_{j=1}^{s}a_je^{i\omega t_j}- i\omega b_je^{i\omega t_j}.\]
	Thus we can write
	\[ \begin{pmatrix}
		\hat{\mu}(-M) \\
		\hat{\mu}(-M+1) \\
		\vdots \\
		\hat{\mu}(M)
	\end{pmatrix}_{(G \times 1)} = \begin{pmatrix}
		\mathcal{F}_G & \mathcal{F}'_G
	\end{pmatrix} x_{\mu},
	\]
	where $\mathcal{F}_G = \big[e^{ikj\Delta}\big]_{k=-M,...,M}^{j=-M,...,M}$ is a $G\times G$ matrix and $\mathcal{F}'_G =\diag{-M,...,M}\mathcal{F}_G$.

	\begin{corollary}
		Assume that $2N \leq M$, let $F_{2N+1}$ and $F'_{2N+1}$ be the $2N+1$ rows $\{M,\dots,M+2N\}$ of $\mathcal{F}_G$ and $\mathcal{F}'_G$ respectively. In addition, let $\tilde{\mathcal{F}}_{2N} = \begin{pmatrix}
			F_{2N+1} & F'_{2N+1}
		\end{pmatrix}$. Then,
		\[ \tilde{\mathcal{F}}_{2N}x_{\mu} = \sqrt{2N}P_{N}v_{\mu} = \sqrt{2N}U_{N}w_{\mu}=(\hat{\mu}(0),\dots,\hat{\mu}(2N))^T \ , \] 
		where
		\[ v_{\mu} = \begin{pmatrix}
			a_1 \\
			\vdots \\
			a_s \\
			b_1 \\
			\vdots \\
			b_s \\
		\end{pmatrix}, \quad w_{\mu} = Hv_{\mu} = 
		\begin{pmatrix}
			a_1 \\
			\vdots \\
			a_s \\
			-iz_1b_1 \\
			\vdots \\
			-iz_sb_s \\
		\end{pmatrix} \ . 
		\]
	\end{corollary}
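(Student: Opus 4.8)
The statement is a pure bookkeeping identity, so the plan is to verify each of the three displayed equalities by unwinding the definitions and comparing the two sides coordinate by coordinate. I would start from the right-hand end. The relation $\bigl(\hat\mu(-M),\dots,\hat\mu(M)\bigr)^{T}=\bigl(\mathcal{F}_G\ \ \mathcal{F}'_G\bigr)x_\mu$ has already been recorded above; using the support description \eqref{xMu} of $x_\mu$ together with $\mathcal{F}_G=[e^{ikj\Delta}]$ and $\mathcal{F}'_G=\diag{-M,\dots,M}\mathcal{F}_G$, one checks that in each coordinate the matrix--vector product collapses from a sum over all grid points to a sum over the $s$ active nodes, reproducing the formula $\hat\mu(k)=\sum_j a_je^{ikt_j}-ikb_je^{ikt_j}$ quoted earlier. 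Since, with the row-indexing of that display, the rows labelled $\{M,\dots,M+2N\}$ are precisely the ones carrying the nonnegative frequencies $k=0,1,\dots,2N$ --- and the hypothesis $2N\le M$ is exactly what places these $2N+1$ rows inside the $G$-row matrix --- restricting the relation to this block gives $\tilde{\mathcal{F}}_{2N}x_\mu=\bigl(\hat\mu(0),\dots,\hat\mu(2N)\bigr)^{T}$.

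Next I would identify this with $\sqrt{2N}\,U_N w_\mu$. Writing $z_j=e^{it_j}$, the $k$-th row of $U_N(\mathbf{t})$ paired with $w_\mu=(a_1,\dots,a_s,-iz_1b_1,\dots,-iz_sb_s)^{T}$ gives, after cancelling the $1/\sqrt{2N}$ normalization,
\begin{align*}
\sqrt{2N}\,\bigl(U_N w_\mu\bigr)_k &= \sum_{j=1}^{s}a_jz_j^{k}+\sum_{j=1}^{s}kz_j^{k-1}\bigl(-iz_jb_j\bigr) \\
&= \sum_{j=1}^{s}a_je^{ikt_j}-ik\sum_{j=1}^{s}b_je^{ikt_j}=\hat\mu(k)
\end{align*}
for every $k=0,\dots,2N$, which is exactly the vector obtained in the first step. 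The middle equality is then purely formal: by definition $P_N(\mathbf{t})=U_N(\mathbf{t})H(\mathbf{t})$ with $H=\diag{1,\dots,1,-iz_1,\dots,-iz_s}$, and $Hv_\mu=(a_1,\dots,a_s,-iz_1b_1,\dots,-iz_sb_s)^{T}=w_\mu$, so $\sqrt{2N}\,P_N v_\mu=\sqrt{2N}\,U_N H v_\mu=\sqrt{2N}\,U_N w_\mu$. Chaining the three computations yields the asserted string of equalities.

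There is no real obstacle here; the only thing requiring care is the index bookkeeping: matching the block of $2N+1$ consecutive rows $\{M,\dots,M+2N\}$ of the $G\times 2G$ partial--Fourier matrix to the frequency window $\{0,\dots,2N\}$, and keeping straight that the nonzero pattern of $x_\mu$ prescribed by \eqref{xMu} is what makes the products against $\mathcal{F}_G$ and $\mathcal{F}'_G$ reduce to sums over the node vector $\mathbf{t}=(t_1,\dots,t_s)$ with coefficients $a_j$ and $-ib_j$, so that everything lines up with the columns of $U_N(\mathbf{t})$ and the entries of $w_\mu$.
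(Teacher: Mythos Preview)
Your proposal is correct and is precisely the direct coordinate-by-coordinate verification that the paper implicitly relies on; the paper states this corollary without an explicit proof, treating it as an immediate consequence of the definitions and the formula $\hat\mu(k)=\sum_j a_je^{ikt_j}-ikb_je^{ikt_j}$ recorded just before. Your bookkeeping of the row selection $\{M,\dots,M+2N\}\leftrightarrow\{k=0,\dots,2N\}$ and the collapse of the $\mathcal{F}_G$, $\mathcal{F}'_G$ products via the support pattern of $x_\mu$ is exactly what is needed.
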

	
	\begin{definition}
	    For $N \in \mathbb{N}$ and $y \in \mathbb{C}^{2N+1}$, let the norm
	    \[ \|y\|_{2,N}^2 := \frac{1}{2N}\sum_{k=0}^{2N}|y_k|^2 \ .\]
	\end{definition}
	
	\subsubsection{Proof of the upper bound}
	As in \cite{Li&Liao2020}, we choose any $\varphi$ such that $\varphi_y \in \{\nu:\|\tilde{\mathcal{F}}_{2N}x_{\nu}-y\|_{2,N} \leq \varepsilon\}$. Note that $x_{\mu}$ satisfies the same constraint $\|\tilde{\mathcal{F}}_{2N}x_{\mu}-y\|_{2,N} \leq \varepsilon$, which means that such $\varphi_y$ exists. Then we have:
	\[ \mathcal{E}(\mathcal{R},N,\varepsilon) \leq \sup_{\mu \in \mathcal{R}}\sup_{y\in B^N_{\varepsilon}(\mu)}\|\varphi_y - \mu\|_2.\]
	By Lemma 4.7 in \cite{Vandermonde} there exists $\Delta_0(M)$ such that for all $\Delta \leq \Delta_0$
	and any $x_{\mu} \in \mathbb{C}^{2G}_{\mathcal{R}(\Delta,\rho,s,\ell,\tau)}$, we have
	\begin{align*}
		\boldsymbol{t}:=\supp(\varphi_y-\mu) &\in \mathcal{R}(\Delta,\rho',s',\ell',\tau')\\
		\Rightarrow x_{\varphi_y} - x_{\mu} &\in \mathbb{C}^{2G}_{\mathcal{R}(\Delta,\rho',s',\ell',\tau')}
	\end{align*}
	where $s'\leq 2s$, $\ell' \leq 2\ell$, $\tau' \geq 1$ and $\rho'=8sM\tau'\Delta$. In addition we have $\rho' \leq \frac{\rho}{2} \leq \frac{1}{4s^2} \leq \frac{1}{s'^2}$ and in particular, $4\tau'\Delta < min(\rho', \frac{1}{{s'}^2}) = \rho'$, therefore, by applying Theorem \ref{mainTheorem}, we obtain that for $K:=\frac{\tau'}{\pi s'}$ and all $N$ satisfying
	\[ \max\bigg(\frac{\pi s'}{2sM\tau'\Delta}=\frac{4\pi s'}{\rho'},4{s'}^3\bigg) \leq N \leq \frac{\pi s'}{\tau'\Delta}\]
    
	we have $K \leq \frac{1}{N\Delta} \leq (2sM)K$ and:
	\begin{align*}
		\frac{2\varepsilon}{\|\varphi_y - \mu\|_2} &\geq \frac{\bigg|\|\tilde{\mathcal{F}}_{2N}x_{\varphi_y}-y\|_{2,N} - \|\tilde{\mathcal{F}}_{2N}x_{\mu}-y\|_{2,N}\bigg|}{\|\varphi_y - \mu\|_2} \geq \frac{\|\tilde{\mathcal{F}}_{2N}(x_{\varphi_y}-x_{\mu})\|_{2,N}}{\|\varphi_y - \mu\|_2} \\
		&= \frac{\|P_{N}(\boldsymbol{t})(v_{\varphi_y}-v_{\mu})\|_2}{\|v_{\varphi_y}-v_{\mu}\|_2} = \frac{\|U_{N}(\boldsymbol{t})(H(\boldsymbol{t})v_{\varphi_y}-H(\boldsymbol{t})v_{\mu})\|_2}{\|H^{-1}(\boldsymbol{t})(H(\boldsymbol{t})v_{\varphi_y}-H(\boldsymbol{t})v_{\mu})\|_2} \\ &\geq  \frac{\|U_{N}(\boldsymbol{t})(w_{\varphi_y}-w_{\mu})\|_2}{\|H^{-1}(\boldsymbol{t})\|_2\|w_{\varphi_y}-w_{\mu}\|_2} 
		\geq \sigma_{\min}(U_{N}(\boldsymbol{t}))  \\ &\geq C_{s',\ell'}{(N\Delta)}^{2\ell' -1}  \geq C_{2s,2\ell}{(N\Delta)}^{4\ell -1}.\qed
	\end{align*}

	\subsubsection{Proof of the lower bound}
	Pick any $(\Delta,\rho,2s,2\ell,\tau)$-clustered configuration $\boldsymbol{t}=(t_1,\dots,t_{2s})$. Let $w\in\mathbb{C}^{4s}$ be a unit norm singular vector of  $U_N(\boldsymbol{t})$ that corresponds to its smallest singular value, put $v=H^{-1}(\boldsymbol{t})w$ and define the corresponding $\mu=\sum_{j=1}^{2s} (v)_{2j-1} \delta_{t_j} + (v)_{2j} \delta'_{t_j} \in \mathcal{R}(\Delta,\rho,2s,2\ell,\tau)$ (so in fact according to our previous notation $w=w_{\mu}$ and $v=v_{\mu}$). By this  construction, we obtain
	\[ \sigma:= \sigma_{\min}(U_{N}(\supp(\mu))= \|U_{N}w_{\mu}\|_2 = \|P_{N}v_{\mu}\|_2 = \|\tilde{\mathcal{F}}_{2N}x_{\mu}\|_{2,N} \ . \]
	
	Write $\boldsymbol{t}$ as a disjoint union of two  $(\Delta,\rho,s,\ell,\tau)$-clustered configurations $\boldsymbol{t}_1,\boldsymbol{t}_2$, implying that $\mu=\mu_1-\mu_2$ where $\supp\mu_i = \boldsymbol{t}_i$ and $\mu_i\in{\mathcal{R}}(\Delta,\rho,s,\ell,\tau)$ for $i=1,2$.   Let $x_i=\frac{\varepsilon}{\sigma}x_{\mu_i} \in \mathbb{C}^{2G}_{\mathcal{R}(\Delta,\rho,s,\ell,\tau)}$,  for $i=1,2$, so that $\frac{\varepsilon}{\sigma}x_{\mu} = x_1-x_2$.
	
	Now suppose we are given the data:
	\[ y = \tilde{\mathcal{F}}_{2N}x_1 = \tilde{\mathcal{F}}_{2N}x_2 + \tilde{\mathcal{F}}_{2N}(x_1-x_2).\]
	Let $e:=\frac{1}{\sqrt{2N}}\tilde{\mathcal{F}}_{2N}(x_1-x_2) \in \mathbb{C}^{2N+1}$.
	The previous equations imply:
	\[ \|e\|_2 = \|\tilde{\mathcal{F}}_{2N}(x_1-x_2)\|_{2,N} = \frac{\varepsilon}{\sigma}\|\tilde{\mathcal{F}}_{2N}x_{\mu}\|_{2,N} = \varepsilon.\]
	For an arbitrary $\varphi$ we have
	\begin{align*}
		\frac{\varepsilon}{\sigma} &= \frac{\varepsilon}{\sigma} \|w_{\mu}\|_2 = \frac{\varepsilon}{\sigma} \|x_{\mu}\|_2\\
		&= \|x_1-x_2\|_2 \\
		& \leq \|x_1-x_{\varphi_{y}}\|_2 + \|x_2-x_{\varphi_{y}}\|_2\\
		& \leq 2\max_{k=1,2} \|x_k-x_{\varphi_{y}}\|_2
	\end{align*}
	and so by definition of $\mathcal{E}$ and Theorem \ref{UpperTheorem} we conclude that for $SRF \geq 2$ it holds
	\[ \mathcal{E}(\mathcal{R},N,\varepsilon) \geq \inf_{\varphi \in \mathcal{A}}\max_{k=1,2}\|x_{\varphi_y} - x_k\|_2 \geq \frac{\varepsilon}{2\sigma} \geq \frac{\varepsilon}{2 C_{\tau,2\ell}{(N\Delta)}^{4\ell -1}}. \qed\]

	\section{Numerical experiments}\label{sec:numerics}
	In order to validate the bounds of Theorems \ref{mainTheorem}
	and \ref{UpperTheorem}, we computed $\sigma_{\min}(U_N)$ for varying values of $\Delta, N, \ell, s$  and the actual clustering configurations. As before, we put $SRF:=\frac{1}{N\Delta}$. We checked two clustering scenarios:
	\begin{enumerate}
		\item
		Figure \ref{subfig:C1} - A single equispaced cluster of size $\ell$ in $[-\frac{\pi}{2}, -\frac{\pi}{2}+\ell\Delta]$ with the rest of the nodes equally spaced and maximally separated in $(-\frac{\pi}{2}+\ell\Delta, \frac{\pi}{2}]$.
		\item
		Figure \ref{subfig:C2} - A multi-cluster configuration with the first equispaced cluster of size $\ell_1$ in $[-\frac{\pi}{2}, -\frac{\pi}{2}+\ell\Delta]$ and the second equispaced cluster of size $\ell_2$ in $[\frac{\pi}{2} - \ell_2\Delta, \frac{\pi}{2}]$  with the rest of the nodes equally spaced and maximally separated in $( -\frac{\pi}{2}+\ell_1\Delta, \frac{\pi}{2} - \ell_2\Delta)$.
	\end{enumerate}
	\begin{figure}[H]
		\centering
		\begin{subfigure}[b]{0.8\linewidth}
			\includegraphics[width=\linewidth]{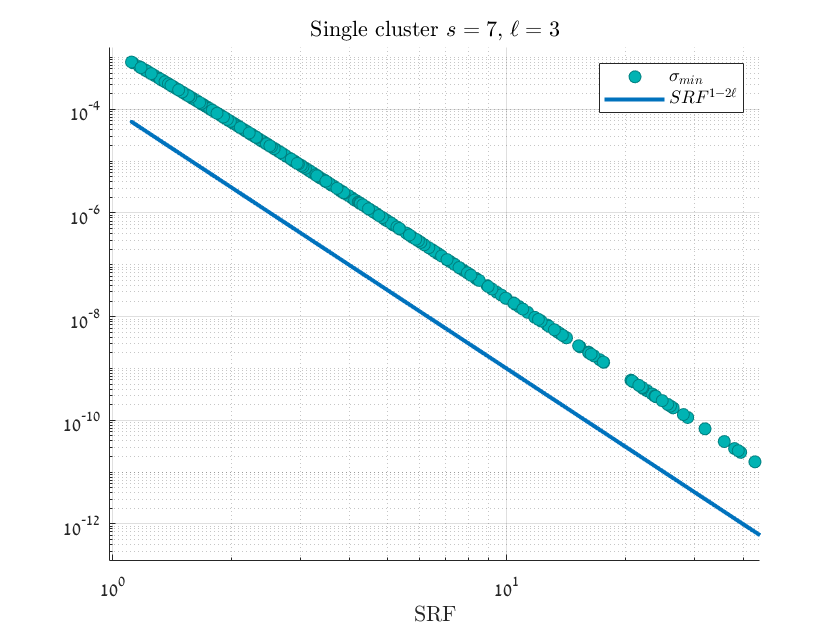}
			\caption{Single Cluster}
			\label{subfig:C1}
		\end{subfigure}
		\begin{subfigure}[b]{0.8\linewidth}
			\includegraphics[width=\linewidth]{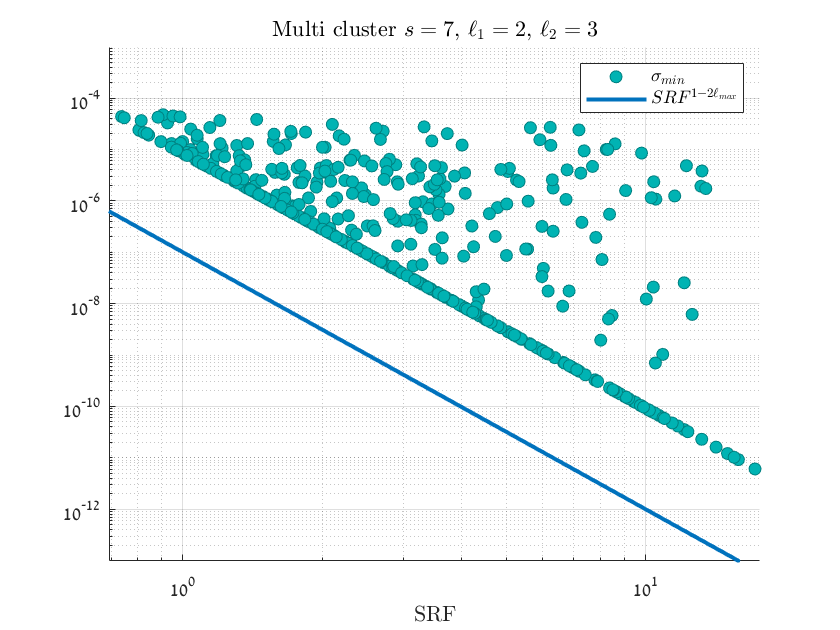}
			\caption{Multi Cluster, $\ell = \ell_{max}$}
			\label{subfig:C2}
		\end{subfigure}
		\caption{Decay rate of $\sigma_{\min}$ as a function of $SRF$. Results of $n=1000$ random experiments with randomly chosen $\Delta, N$ are plotted versus the theoretical bound ${SRF}^{1-2\ell}$.}
    \end{figure}
	We also show in figure \ref{fig:u} that the vector $u$ defined in \eqref{def:u} is indeed an approximate minimal singular vector,  by plotting the Rayleigh quotient $\frac{\|\Phi_M(\boldsymbol{\omega})u\|_2}{\|u\|_2}$ versus the minimal singular value $\sigma_{\min}(\Phi_M(\boldsymbol{\omega}))$, where $\Phi_M$ is the confluent Vandermonde matrix as in Definition \ref{unnormlizedConf} and $\boldsymbol{\omega}$ is a single-cluster equispaced configuration. 
	
	\begin{figure}[ht!]
		\includegraphics[width=\linewidth]{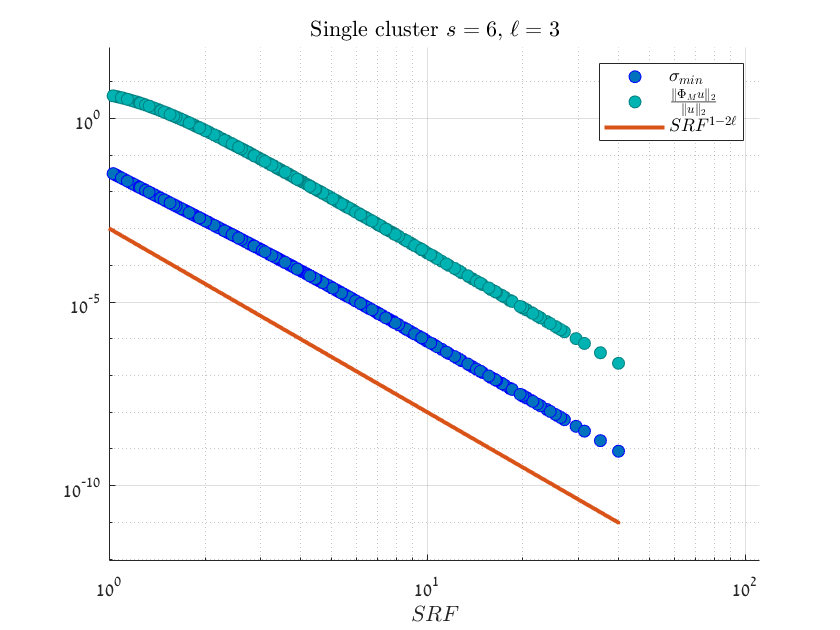}
		\caption{The Rayleigh quotient of the vector $u$ defined in \eqref{def:u} versus the minimal singular value of $\Phi_M$. We can see that they scale the same and differ by a constant.}
		\label{fig:u}
	\end{figure}

	Finally, in order to validate the bounds of Theorem \ref{theorem:min-max-error}, we computed the $\ell^2$ min-max error $\mathcal{E}$ as in Definition  \ref{def:minmax} and also the $\ell^2$ errors of estimating the nodes $\mathcal{E}_{\xi}$, and the coefficients $\mathcal{E}_{a}$, $\mathcal{E}_{b}$ of the worst-case discrete distribution $\mu$ defined by \eqref{eq:I} assuming $s=\ell$. We used the ESPRIT (Estimation of Signal Parameters
	via Rotation Invariance Techniques) \cite{ESPRIT1990} method for recovering the nodes $\{t_j\}_{j=1}^{s}$ (see more about this method in appendix \ref{appendix:ESPRIT}). ESPRIT is considered to be one of the best performing subspace methods for estimating parameters of model \eqref{simpleModel} with white Gaussian noise. Originally developed in the context of frequency estimation \cite{SpectralAnalysis2005}, it has been generalized to the full model \eqref{extendedmodel} in \cite{Badeau2006}. Recently it has been shown that if the noise level $\varepsilon$ in the measurements \eqref{eq:noisy-data} is sufficiently small, the error committed by ESPRIT  for estimating the nodes of the simple model \eqref{simpleModel} is nearly min-max \cite{Li2020}. Consequently, we conjecture the same near-optimal behaviour in the model \eqref{generalizedModel}. In order to recover the coefficients $a_j$, $b_j$ we solve a linear system of equations by the Least Squares method:
	\[ \min\|U_N(\tilde{\boldsymbol{\xi}})v_{\mu} - y\|_2 \ , \]
	where $\boldsymbol{\tilde{\xi}}$ are the recovered nodes.
	Note that we prove the theoretical bound to the on-grid model however the ESPRIT algorithm recovers the nodes without taking the grid assumption into account. 
	We have checked two cases:
	\begin{enumerate}
		\item
		Figure \ref{subfig:mm1} - A single equispaced cluster of size $s=\ell=2$ with error $\varepsilon = 10^{-12}$.
		\item
		Figure \ref{subfig:mm2} - A single equispaced cluster of size $s=\ell=3$ with error $\varepsilon = 10^{-12}$.
	\end{enumerate}
	Our results suggest that the ESPRIT method might indeed be optimal, meaning that it attains the min-max error bounds we established in Theorem \ref{theorem:min-max-error} for the recovered parameters of signal \eqref{generalizedModel}.
	\begin{figure}[h]
		\centering
		\begin{subfigure}[c]{1\linewidth}
			\includegraphics[width=\linewidth]{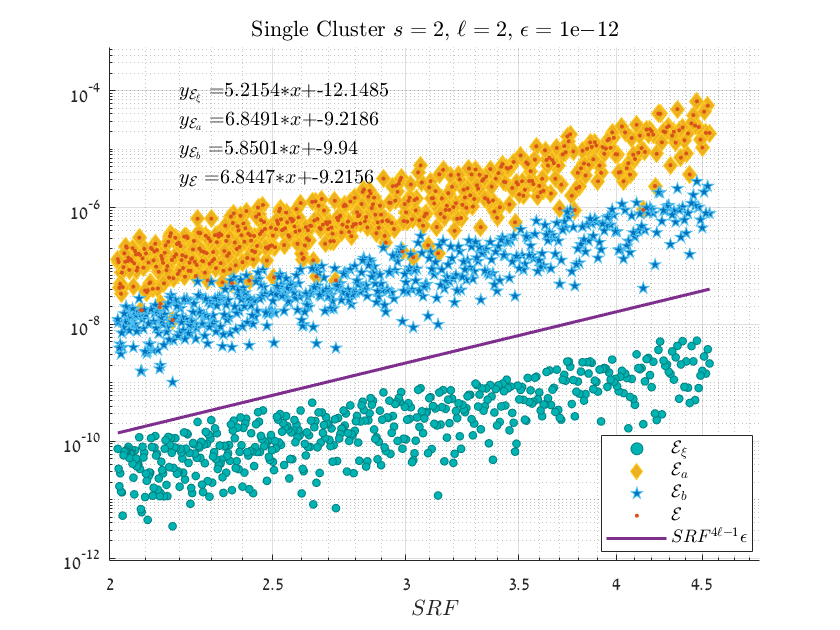}
			\caption{ }
			\label{subfig:mm1}
		\end{subfigure}
	\end{figure}
	\begin{figure}[H]\ContinuedFloat
	    \centering
		\begin{subfigure}{1\linewidth}
			\includegraphics[width=\linewidth]{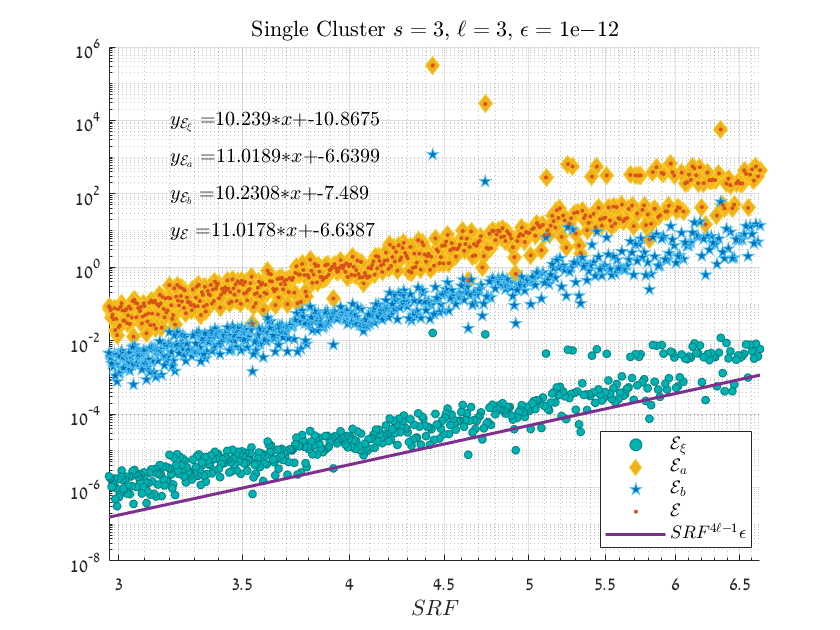}
			\caption{ }
			\label{subfig:mm2}
		\end{subfigure}
		\caption{Accuracy of ESPRIT. Results of n=500 random experiments with randomly chosen $\Delta$ and fixed $N$ are plotted versus the theoretical bound ${SRF}^{4\ell-1}\varepsilon$.}
	\end{figure}
	Note that all figures are in logarithmic scale.
	
	The code for the above experiments is available at  \url{https://github.com/Gnflu/SR-of-conVan-sys.git}
	
	\appendix
	\section{Computations for Theorem \ref{UpperTheorem}}
	\subsection{Finite difference coefficients}\label{appendix:taylor}	
	We seek approximation of the form:
	\[ D_M^{(2\ell -1)}(\omega) \approx \sum_{i=1}^{\ell}A_iD_M(x_i)+B_iD'_M(x_i) = S_{A,B}(\omega)\]
	where \[ x_i = \omega-\tau_i\Delta, \quad \Delta=\frac{\alpha}{M} \ , \]
	and
	\[ S_{A,B}(\omega)=\sum_{i=1}^{\ell}A_iD_M(x_i)+B_iD'_M(x_i) = \sum_{i=1}^{\ell}A_iD_M(\omega+x_i-\omega)+B_iD'_M(\omega+x_i-\omega).\]
	Let $h_i := x_i -\omega = -\tau_i\Delta = \tau_ih$, $h=-\Delta$. Then by Taylor expansion of $D_M(\omega+h_i)$ and using the integral form of the remainder we have:
	\begin{align*}
		S_{A,B}(\omega) = \sum_{i=1}^{\ell}A_i\bigg(\sum_{k=0}^{2\ell -1}\frac{D^{(k)}_M(\omega)}{k!}h_i^k + \int_{\omega}^{\omega+h_i}\frac{D_M^{(2\ell)}(t)}{(2\ell -1)!}(\omega+h_i-t)^{2\ell -1}\,dt \bigg) \\
		+B_i\bigg(\sum_{k=0}^{2\ell -2}\frac{D^{(k+1)}_M(\omega)}{k!}h_i^k + \int_{\omega}^{\omega+h_i}\frac{D_M^{(2\ell)}(t)}{(2\ell -2 )!}(\omega+h_i-t)^{2\ell -2}\,dt \bigg) 
	\end{align*}
	By the change of variable $t=\omega+h_ir$ we have $dt=h_idr$ and therefore
	\begin{align*}
		S_{A,B}(\omega) &= \sum_{k=0}^{2\ell -1}\frac{D_M^{(k)}(\omega)}{k!}\big(\sum_{i=1}^{\ell}A_ih_i^k \big)
		+ \sum_{k=0}^{2\ell -2}\frac{D_M^{(k+1)}(\omega)}{k!}\big(\sum_{i=1}^{\ell}B_ih_i^k \big) \\
		& \qquad + \frac{1}{(2\ell -1)!}\sum_{i=1}^{\ell}A_i\int_{0}^{1}D_M^{(2\ell)}(\omega+h_ir)(h_i-h_ir)^{2\ell -1}h_i\,dr \\
		&\qquad + \frac{1}{(2\ell -2)!}\sum_{i=1}^{\ell}B_i\int_{0}^{1}D_M^{(2\ell)}(\omega+h_ir)(h_i-h_ir)^{2\ell -2}h_i\,dr \\ &= P(\omega) + R_A(\omega) + R_B(\omega),
	\end{align*}
	where
	\begin{align*}
		P(\omega) &= \sum_{k=0}^{2\ell -1}\frac{D_M^{(k)}(\omega)}{k!}\big(\sum_{i=1}^{\ell}A_ih_i^k \big)
		+ \sum_{k=0}^{2\ell -2}\frac{D_M^{(k+1)}(\omega)}{k!}\big(\sum_{i=1}^{\ell}B_ih_i^k \big)\ , \\
		R_A(\omega) &= \frac{1}{(2\ell -1)!}\sum_{i=1}^{\ell}A_i\int_{0}^{1}D_M^{(2\ell)}(\omega+h_ir)(h_i-h_ir)^{2\ell -1}h_i\,dr \ , \\
		R_B(\omega) &= \frac{1}{(2\ell -2)!}\sum_{i=1}^{\ell}B_i\int_{0}^{1}D_M^{(2\ell)}(\omega+h_ir)(h_i-h_ir)^{2\ell -2}h_i\,dr .
	\end{align*}
	We seek $A_1,\dots,A_{\ell}$ and $B_1,\dots,B_{\ell}$ so that $P(\omega)\equiv D_M^{(2\ell -1)}(\omega)$, thus the following equations should be fulfilled:
	
	\begin{flushleft}
		\begin{enumerate}
			\item	
			\[\sum_{i=1}^{\ell}A_i = 0\] 
			\item
			\[\sum_{i=1}^{\ell}(A_i+\frac{kB_i}{h_i})h_i^k = 0 \quad \quad k = 1,\dots,2\ell -2\]
			\item
			\[\sum_{i=1}^{\ell}(A_i+\frac{(2\ell -1)B_i}{h_i})h_i^{2\ell -1} = (2\ell -1)! \]
		\end{enumerate}
	\end{flushleft}
	This is equivalent to solving the following linear system of equations:
	\[	
	U_{2\ell}\begin{pmatrix}
		A_1 \\
		\vdots \\
		A_{\ell} \\
		B_1 \\
		\vdots \\
		B_{\ell} \\
	\end{pmatrix}
	= \begin{pmatrix}
		0 \\
		\vdots \\
		\vdots \\
		0 \\
		(2\ell -1)! \\
	\end{pmatrix}
	\ , \]
	where
	\[ U_{2\ell} = \begin{pmatrix}
		1 & \dots & 1 & 0 & \dots & 0\\
		h_1 & \dots & h_{\ell} & 1 & \dots & 1\\
		\vdots & \dots & \vdots & \vdots & \dots & \vdots\\
		h_1^{2\ell -1} & \dots & h_{\ell}^{2\ell -1} & (2\ell -1)h_1^{2\ell -2} & \dots & (2\ell -1)h_{\ell}^{2\ell -2}
	\end{pmatrix} \ .
	\]
	Thus $A_j, B_j$ are given by:
	\begin{equation}\label{eq:AB} \tag{$\star\star$}
		\begin{pmatrix}
			A_1 \\
			\vdots \\
			A_{\ell} \\
			B_1 \\
			\vdots \\
			B_{\ell}
		\end{pmatrix} =
		U^{-1}_{2\ell}
		\begin{pmatrix}
			0 \\
			\vdots \\
			\vdots \\
			0 \\
			(2\ell -1)!
		\end{pmatrix}.
	\end{equation}
	In particular, if $U^{-1}_{2\ell} = \begin{pmatrix}
		V \\ W
	\end{pmatrix}$
	then $A_j = (2\ell -1)! v_{j,2\ell}$ and $B_j = (2\ell -1)! w_{j,2\ell}$,
	where $V, W \in \mathbb{C}^{\ell \times 2\ell}$ and $v_{i,j}, \ w_{i,j}$ denote the $(i,j)th$ entry of $V, W$ respectively.
	\\
	\subsection{Proof of Lemma \ref{lemma:RaRb}}\label{appendix:RaRb}
	Let
	\[ h_*:= \argmax_{h_i}\bigg|\int_{0}^{1}D_M^{(2\ell)}(\omega+h_ir)(h_i-h_ir)^{2\ell -1}h_i\,dr\bigg| \]
	
	Using the Cauchy-Schwartz inequality we have
	\begin{align*}
		\big\|R_A(\omega) \big\|^2_{L^2(\mathbb{T})} &= \frac{1}{(2\ell -1)!^2}\bigg\|\sum_{i=1}^{\ell}A_i\int_{0}^{1}D_M^{(2\ell)}(\omega+h_ir)(h_i-h_ir)^{2\ell -1}h_i\,dr \bigg\|^2_{L^2(\mathbb{T})} \\
		&= \frac{1}{(2\ell -1)!^2}\int_0^{1}\bigg|\sum_{i=1}^{\ell}A_i\int_{0}^{1}D_M^{(2\ell)}(\omega+h_ir)(h_i-h_ir)^{2\ell -1}h_i\,dr\bigg|^2\,d\omega \\
		&\leq \frac{1}{(2\ell -1)!^2}\bigg(\sum_{i=1}^{\ell}\big|A_i\big|\bigg)^2\int_0^{1}\bigg|\int_{0}^{1}D_M^{(2\ell)}(\omega+h_*r)(h_*-h_*r)^{2\ell -1}h_*\,dr\bigg|^2\,d\omega \\
		&\leq \frac{1}{(2\ell -1)!^2}\bigg(\sum_{i=1}^{\ell}\big|A_i\big|\bigg)^2\int_0^{1}\bigg(\int_{0}^{1}\big|D_M^{(2\ell)}(\omega+h_*r)\big|^2\,dr\int_{0}^{1}\big|h_*-h_*r\big|^{4\ell -2}h_*^2\,dr\bigg)\,d\omega \\
		&\leq \frac{1}{(2\ell -1)!^2}\bigg(\sum_{i=1}^{\ell}\big|A_i\big|\bigg)^2\int_0^{1}\bigg(\int_{0}^{1}|D_M^{(2\ell)}(\omega+h_*r)|^2\,dr |\tau\Delta|^{4\ell}\bigg)\,d\omega \\
		&\leq \frac{1}{(2\ell -1)!^2}\bigg(\sum_{i=1}^{\ell}\big|A_i\big|\bigg)^2\big|\tau\Delta\big|^{4\ell}\int_0^{1}\bigg(\int_{0}^{1}|D_M^{(2\ell)}(\omega+h_*r)|^2\,d\omega\bigg) \,dr \\
		&\leq \frac{1}{(2\ell -1)!^2}\bigg(\sum_{i=1}^{\ell}\big|A_i\big|\bigg)^2\big|\tau\Delta\big|^{4\ell}\big\|D_M^{(2\ell)}\big\|^2_{L^2(\mathbb{T})} \\ &\leq \frac{\tau^{4\ell}}{(2\ell -1)!^2}\bigg(\sum_{i=1}^{\ell}\big|A_i\big|\bigg)^2\Delta^{4\ell}(M+1)(2\pi M)^{4\ell} 
	\end{align*}
	Similarly we get that:
	\[ \|R_B(\omega) \|^2_{L^2(\mathbb{T})} \leq \frac{\tau^{4\ell -2}}{(2\ell -1)!^2}\bigg(\sum_{i=1}^{\ell}\big|B_i\big|\bigg)^2\Delta^{4\ell -2}(M+1)(2\pi M)^{4\ell} \]
	
	\subsection{Proof of Lemma \ref{lemma:AB}}\label{appendix:AB}
	Recall the definitions of $v_{i,j}$ and $w_{i,j}$ from Appendix \ref{appendix:taylor}.
	From expressions (3.10) and (3.12) evaluated in Gautchi's paper \cite{Gautschi1963}, and using that $M \leq \frac{1}{\Delta}$ we have:
	\begin{enumerate}
		\item
		On one hand
		\begin{align*}
			\sum_{\mu=1}^{2\ell}\big|v_{n,\mu}\big| &\leq \Bigg(\bigg|1+2h_n\sum_{\nu \neq n}^{ }\frac{1}{(h_{n}-h_{\nu})}\bigg|+2\bigg|\sum_{\nu \neq n}^{ }\frac{1}{(h_{n}-h_{\nu})}\bigg|\Bigg)\prod_{\nu \neq n}^{ }\Big(\frac{1+|h_{\nu}|}{|h_{n}-h_{\nu}|}\Big)^2 \\
			&\leq \Bigg(\bigg|1+2\tau_nh\sum_{\nu \neq n}^{ }\frac{1}{(\tau_n-\tau_\nu)h}\bigg|+2\bigg|\sum_{\nu \neq n}^{ }\frac{1}{(\tau_n-\tau_\nu)h}\bigg|\Bigg)\prod_{\nu \neq n}^{ }\Big(\frac{1+|\tau_\nu h|}{|(\tau_n-\tau_\nu)h|}\Big)^2 \\
			&\leq \Big(\big|1+2{\tau(\ell-1)}\big|+\big|\frac{2\ell}{h}\big|\Big)\Big(\frac{1+\tau|h|}{|h|}\Big)^{2\ell-2} \\
			&= \Big(1 + 2\tau\ell -2\tau + 2\ell\frac{M}{\alpha}\Big)\Big(\frac{M}{\alpha}+\tau\Big)^{2\ell-2} \\
			&\leq 2\ell\Big(\frac{M}{\alpha}+\tau\Big)^{2\ell-1} \\
			&\leq C_v(\ell,\tau)\Big(\frac{M}{\alpha}\Big)^{2\ell-1}
		\end{align*}
		\item
		On the other hand,
		\begin{align*}
			\sum_{\mu=1}^{2\ell}\big|w_{n,\mu}\big| &\leq \big(1+|h_n|\big)\prod_{\nu \neq n}^{ }\Big(\frac{1+|h_{\nu}|}{|h_{n}-h_{\nu}|}\Big)^2 
			\leq \big(1+\tau|h|\big)\Big(\frac{1+\tau|h|}{|h|}\Big)^{2\ell-2} \\
			&= \Big(1+\tau\frac{\alpha}{M}\Big)\Big(\frac{M}{\alpha}+\tau\Big)^{2\ell-2} \leq 2\Big(\frac{M}{\alpha}+\tau\Big)^{2\ell-2} \\
			&\leq C_w(\ell,\tau)\Big(\frac{M}{\alpha}\Big)^{2\ell-2}
		\end{align*}

	\end{enumerate}
	Now we can evaluate the following expressions:
	\begin{enumerate}
		\item
		\begin{align*}
			\sum_{i=0}^{\ell}|A_i| &= (2\ell-1)!\sum_{i=1}^{\ell}|v_{i,2\ell}| \leq (2\ell-1)!\sum_{i=1}^{\ell}\sum_{\mu=1}^{2\ell}|v_{i,\mu}| \\
			&\leq \ell(2\ell-1)!C_v(\ell,\tau)\bigg(\frac{M}{\alpha}\bigg)^{2\ell-1}\\ &= C_A(\ell,\tau)\bigg(\frac{M}{\alpha}\bigg)^{2\ell-1}
		\end{align*}
		\item
		\begin{align*}
			\sum_{i=0}^{\ell}|B_i| &= (2\ell-1)!\sum_{i=1}^{\ell}|w_{i,2\ell}| \leq (2\ell-1)!\sum_{i=1}^{\ell}\sum_{\mu=1}^{2\ell}|w_{i,\mu}| \\
			&\leq \ell(2\ell-1)!C_w(\ell,\tau)\bigg(\frac{M}{\alpha}\bigg)^{2\ell-2}\\ &= C_B(\ell,\tau)\bigg(\frac{M}{\alpha}\bigg)^{2\ell-2}
		\end{align*}
	\end{enumerate}
	\subsection{Proof of Lemma \ref{lemma:L2}}\label{appendix:L2}
	For any tempered distribution $\mu$ supported in $\mathbb{T}$, we will show that the following is true:
	\[\sum_{m=0}^{M}\big|\hat{\mu}(m)\big|^2 = \big\|\mu * D_M\big\|^2_{L^2(\mathbb{T})}\]
	
	First:
	\begin{align*}
		(\mu * D_M)(\omega) &= \int_{0}^{1}\mu(y)D_m(\omega -y) \, dy = \int_{0}^{1}\mu(y)\sum_{m=0}^{M}e^{2\pi im(\omega-y)} \, dy \\ &=
		\sum_{m=0}^{M}e^{2\pi im\omega}\bigg(\int_{0}^{1}\mu(y)e^{-2\pi imy} \, dy \bigg) \\ &= \sum_{m=0}^{M}\hat{\mu}(m)e^{2\pi im\omega} \ .
	\end{align*}

	Now we can show the desired equality:
	\begin{align*}
		\big\|(\mu * D_M)(\omega)\big\|^2_{L^2(\mathbb{T})}
		&= \int_{0}^{1}\big|\sum_{m=0}^{M}e^{2\pi im\omega}\hat{\mu}(m)\big|^2 \, d\omega = \int_{0}^{1}\big(\sum_{m=0}^{M}e^{2\pi im\omega}\hat{\mu}(m)\big)\overline{\big(\sum_{m=0}^{M}e^{2\pi im\omega}\hat{\mu}(m)\big)} \, d\omega \\
		&= \int_{0}^{1}\sum_{m=0}^{M}\big|e^{2\pi im\omega}\hat{\mu}(m)\big|^2 + \sum_{m=0}^{M}\sum_{k \neq m}^{}e^{2\pi i(m-k)\omega}\hat{\mu}(m)\overline{\hat{\mu}(k)} \, d\omega \\
		&= \sum_{m=0}^{M}\big|\hat{\mu}(m)\big|^2 + \sum_{m=0}^{M}\sum_{k \neq m}^{}\hat{\mu}(m)\overline{\hat{\mu}(k)}\int_{0}^{1}e^{2\pi i(m-k)\omega} \, d\omega \\ &= \sum_{m=0}^{M}\big|\hat{\mu}(m)\big|^2 \qed
	\end{align*}
	\subsection{Proof of Lemma \ref{lemma:u}}\label{appendix:u}
	Let $U_{2\ell}$ be defined as in \eqref{appendix:taylor}, and let
	\[X = \begin{pmatrix}
		A_1 \\
		\vdots \\
		A_{\ell} \\
		B_1 \\
		\vdots \\
		B_{\ell} 
	\end{pmatrix}, \quad b = 
	\begin{pmatrix}
		0 \\
		\vdots \\
		\vdots \\
		0 \\
		(2\ell-1)! 
	\end{pmatrix}\]
	As in appendix \ref{appendix:taylor} we know that:
	\[ U_{2\ell}x = b \tag{*}\]
	We can write (*) as:
	\[
	\underbrace{\begin{pmatrix}
			1 & & & \\
			& h & & \\
			& & \ddots & \\
			& & & h^{2\ell-1}
	\end{pmatrix}}_{D}
	\underbrace{\begin{pmatrix}
			1 & \dots & 1 & 0 & \dots & 0\\
			\tau_1 & \dots & \tau_{\ell} & 1 & \dots & 1\\
			\vdots & \dots & \vdots & \vdots & \dots & \vdots\\
			\tau_1^{2\ell -1} & \dots & \tau_{\ell}^{2\ell -1} & (2\ell -1)\tau_{1}^{2\ell -2} & \dots & (2\ell -1)\tau_{\ell}^{2\ell -2}
	\end{pmatrix}}_{L}
	\underbrace{\begin{pmatrix}
			A_1 \\
			\vdots \\
			A_{\ell} \\
			\frac{B_1}{h} \\
			\vdots \\
			\frac{B_{\ell}}{h} 
	\end{pmatrix}}_{y}
	= b
	\]
	\[ y = L^{-1}D^{-1}b, \quad x=\diag{1,\dots,1,h,\dots,h}y\]
	\[ x= \begin{pmatrix}
		I & 0 \\
		0 & hI
	\end{pmatrix}L^{-1}D^{-1}b = \begin{pmatrix}
		I & 0 \\
		0 & hI
	\end{pmatrix}L^{-1}\begin{pmatrix}
		0 \\ 					
		\vdots \\
		0 \\
		(2\ell -1)!h^{1-2\ell} 
	\end{pmatrix}
	= \begin{pmatrix}
		{(L^{-1})}_{1,2\ell}(2\ell -1)!h^{1-2\ell} \\
		\vdots \\
		{(L^{-1})}_{\ell,2\ell}(2\ell -1)!h^{1-2\ell} \\
		{(L^{-1})}_{\ell+1,2\ell}(2\ell -1)!h^{2-2\ell} \\
		\vdots \\
		{(L^{-1})}_{2\ell,2\ell}(2\ell -1)!h^{2-2\ell} 
	\end{pmatrix}
	\]
	\[ \|x\|^2_2 = \sum_{i=1}^{\ell}{(L^{-1})}_{i,2\ell}^2{(2\ell -1)!}^2h^{2(1-2\ell)} + \sum_{i=1}^{\ell}{(L^{-1})}_{\ell+i,2\ell}^2{(2\ell -1)!}^2h^{2(2-2\ell)} \geq \sum_{i=1}^{2\ell}{(L^{-1})}_{i,2\ell}^2{(2\ell -1)!}^2\Delta^{2-4\ell}\]
	\begin{align*}
		\sum_{i=1}^{2\ell}{(L^{-1})}_{i,2\ell}^2 = \|L^{-1}(0,...,0,1)^T\|_2^2 \geq \min_{\|w\|_2=1}\|L^{-1}w\|_2^2 = \sigma_{\min}^2(L^{-1})  =  \frac{1}{\sigma_{max}^2(L)} = \frac{1}{\|L\|_2^{2}} \geq \frac{1}{rank(L)\|L\|_{\infty}^{2}}
	\end{align*}
	\begin{align*}
		\|L\|_{\infty} \leq \sum_{i=1}^{\ell}\tau_i^{2\ell -1}  + \sum_{i=1}^{\ell}(2\ell -1)\tau_i^{2\ell -2} \leq \ell(\tau^{2\ell -1}+(2\ell -1)\tau^{2\ell -2}) \leq 2\ell^2\tau^{2\ell -1}
	\end{align*}
	\begin{align*}
		\|x\|_2^2 \geq \frac{{(2\ell -1)!}^2\Delta^{2-4\ell}}{2\ell {(2\ell^2\tau^{2\ell -1})}^2}  \Rightarrow
		\|x\|_2 \geq \frac{(2\ell -1)!\Delta^{1-2\ell}}{\sqrt{8\ell^5}\tau^{2\ell -1}} = C_3(\ell,\tau)\Delta^{1-2\ell}
	\end{align*}
	Finally, for $u$ defined in \eqref{def:u}, we have
	\[ u = \Delta^{2\ell-1}\begin{pmatrix}
		I & 0 \\
		0 & \mathbf{Z}
	\end{pmatrix}X=\Delta^{2\ell-1}\mathbf{Z}x, \quad \mathbf{Z}=\diag{z_1,\dots,z_{\ell}}
	\]
	\begin{align*}
		\|u\|_2 &= \Delta^{2\ell-1}\|\mathbf{Z}x\|_2\|\mathbf{Z}^{-1}\|_2\|\mathbf{Z}^{-1}\|^{-1}_2 
		\\ &\geq \Delta^{2\ell-1}\|\mathbf{Z}\mathbf{Z}^{-1}x\|_2\|\mathbf{Z}^{-1}\|^{-1}_2  = \Delta^{2\ell-1}\|x\|_2\|\mathbf{Z}^{-1}\|^{-1}_2 \\
		&\geq \frac{1}{\sqrt{2\ell}}\Delta^{2\ell-1}\Delta^{1-2\ell}C_3(\ell,\tau) = \tilde{C}_3(\ell,\tau)
	\end{align*}
	We used in last inequality the following property:
	\[ \|\mathbf{Z}\|_2 \leq \sqrt{rank(\mathbf{Z})}\|\mathbf{Z}\|_{\infty}\]
	
	\section{ESPRIT Method}\label{appendix:ESPRIT}
	We provide the description of the matrix for completeness, see e.g. \cite{Badeau2006,batenkov2013c}.
	\begin{definition}[Hankel Matrix]\label{Hankel}
		Let $\mu(x)=\sum_{j=1}^{s}a_j\delta(x-t_j)+b_j\delta'(x-t_j)$, $a_j, b_j \in \mathbb{C}$ and $\boldsymbol{t}=(t_1,\dots,t_s)$, $t_j \in \mathbb{T}$, thus
		\[ m_k := \hat{\mu}(k) = \sum_{j=1}^{s}a_je^{-2\pi ikt_j}+2\pi ikb_je^{-2\pi ikt_j} = \sum_{j=1}^{s}a_jz_j^k+2\pi ikb_jz_j^k\]
		Then we define the $C\times C$ Hankel matrix as follows:
		\[ H_C:= \begin{pmatrix}
			m_0 & m_1 & \dots & m_{C-1} \\
			m_1 & m_2 & \dots & m_{C} \\
			\vdots & \vdots & \vdots & \vdots \\
			m_{C-1} & m_C & \dots & m_{2C-2}
		\end{pmatrix}\]
		where $C:=2s$ (number of unknown coeffients).
	\end{definition}
	
	The ESPRIT (and other subspace methods) relies on the following
	observations:
	\begin{enumerate}
		\item
		The range (column space) of both the data matrix $H_C$ \eqref{Hankel} and the confluent
		Vandermonde matrix $\Phi:=\Phi_{2C-1}$ \refeq{reconfluentVan} are the same, namely $H_C$ admits the following factorization: 
		\[H_C = \Phi B\Phi^T \ , \] where $B:=\diag{a_1,\dots,a_s,b_1,\dots,b_s}$.
		\item 
		The matrix $\Phi$ has the so-called rotational invariance property \cite{Badeau2006}:
		\[ \Phi^{\uparrow} = \Phi^{\downarrow}J\]
		where $\Phi^{\uparrow}$ denotes $\Phi$ without the first row, $\Phi^{\downarrow}$ denotes $\Phi$ without the last
		row, and $J$ is a block diagonal matrix whose $i^{\text{th}}$ block is the $2\times 2$ Jordan
		block with the node $z_i$ on the diagonal.
	\end{enumerate}
	Suppose we know $\Phi$; then the matrix $J$ could be found by
	\[ J = \Phi_{\downarrow}^{\#}\Phi^{\uparrow}\]
	(where $\#$ denotes the Moore–Penrose pseudoinverse), and then the nodes $z_j$ could be recovered as the eigenvalues of $J$.
	\par Unfortunately, $\Phi$ is unknown in advance, but suppose we had at our disposal a matrix $W$ whose column space was identical to that of $\Phi$. In that case, we would
	have $W = \Phi G$ for an invertible $G$, and consequently
	\[ W^{\uparrow} = W^{\downarrow}\Psi, \]
	where
	\[ \Psi = G^{-1}JG,\]
	which means that the eigenvalues of $\Psi$ are also $\{z_j\}$. Such a matrix $W$ can be obtained, for example, from the singular value decomposition (SVD) of the data matrix/covariance matrix. To summarize, the ESPRIT method for estimating $\{z_j\}$, as used in our experiments below, is as follows.
	\begin{algorithm}
		\begin{algorithmic}[1]
			\Require A $C\times C$ Hankel matrix $H_C$ built from the measurements.
			\Ensure Recovered nodes $\{z_j\}$.
			\State Compute the SVD $H_C=W\Sigma V^T$ 
			\State Calculate $\Psi=W^{\#}_{\downarrow}W^{\uparrow}$ 
			\State Set $\{z_j\}$ to be the eigenvalues of $\Psi$ with appropriate multiplicities (use, e.g., arithmetic neans to estimate multiple nodes which are scattered by the noise).
		\end{algorithmic}
		\caption{ESPRIT method for recovering the nodes $\{z_j\}$}
		\label{alg:ESPRIT}
	\end{algorithm}
	
	\printbibliography
	
\end{document}